\newfont{\Bb}{msbm10 scaled1200}
\newcommand{\basering}[1]{\ensuremath{\mathbb{#1}}}
\newcommand{\ZZ}{\basering{Z}}
\newcommand{\CC}{\basering{C}}
\newcommand{\NN}{\basering{N}}
\newcommand{\QQ}{\basering{Q}}
\DeclareMathOperator{\OO}{\mathcal O}
\begin{document}
\def\fitt{\text Fitt}
\def\vd{\text{VD}_\infty}
\def\sig{{\tilde\Sigma}}
\def\alt{{\text{\scriptsize Alt}}}
\def\pmat{\begin{pmatrix}}
\def\emat{\end{pmatrix}}
\def\trace{\text{trace }}
\def\cl{\centerline}
\def\ig{\includegraphics}
\def\vs{\vskip 10pt}
\def\vvs{\vskip 5pt}
\def\sign{\text{sign }}
\def\vsn{\vskip 10pt\ni}
\def\cc{\circ}
\def\y{\item}
\def\ni{\noindent}
\def\ben{\begin{enumerate}}
\def\een{\end{enumerate}}
\def\beq{\begin{equation}}
\def\ssm{\smallsetminus}
\def\eeq{\end{equation}}
\def\bit{\begin{itemize}}
\def\eit{\end{itemize}}
\def\ec{\end{center}}
\def\bc{\begin{center}}
\def\ld{{\ldots}}
\def\cd{{\cdots}}


\def\d{\mathbf{d}}
\def\e{\mathbf{e}}
\def\Gl{\mbox{Gl}}
\def\L{\mathfrak{l}}
\def\pgl{\mathfrak{pgl}}
\def\gl{\mathfrak{gl}}
\def\gg{\mathfrak{g}}
\def\la{\langle}
\def\ra{\rangle}


\def\th{{\theta}}
\def\ve{{\varepsilon}}
\def\pp{{\varphi}}
\def\a{{\alpha}}
\def\l{{\lambda}}
\def\rr{{\rho}}
\def\ss{{\sigma}}
\def\g{{\gamma}}
\def\G{{\Gamma}}
\def\o{{\omega}}
\def\O{{\Omega}}
\def\k{{\kappa}}


\def\Proof{{\bf Proof}\hspace{.2in}}
\def\eop{\hspace*{\fill}$\Box$ \vskip \baselineskip}
\def\eoq{{\hfill{$\Box$}}}

\def\bu{\bullet}
\def\tr{{\pitchfork}}
\def\b{{\bullet}}
\def\w{{\wedge}}
\def\p{{\partial}}

\def\fitt{\text{Fitt}}
\def\supp{\mbox{supp}}
\def\im{\mbox{im}}
\def\Der{\mbox{Der}}
\def\coker{\text{Coker }}
\def\ker{\text{Ker }}
\def\Mat{\mbox{Mat}}
\def\Rep{\mbox{Rep}}
\def\sgn{\mbox{sign}}
\def\cod{\mbox{codim }\ }
\def\hh{\mbox{height }\ }
\def\opp{\mbox{\scriptsize opp}}
\def\vol{\mbox{vol}}
\def\deg{\mbox{deg}}
\def\Sym{\mbox{Sym}}
\def\Ext{\mbox{Ext}}
\def\Hom{\mbox{Hom}}
\def\End{\mbox{End}}
\def\Aut{\mbox{Aut}}
\def\DD{{\Der(\log D)}}
\def\Dh{{\Der(\log h)}}
\def\pd{\mbox{pd}}
\def\dim{{\text{dim}\,}}
\def\depth{{\mbox{depth}}}


\def\EE{{\cal E}}
\def\R{{\cal R}}
\def\L{{\cal L}}
\def\C{{\cal C}}
\def\A{{\cal A}}
\def\TT{{\cal T}}
\newcommand{\bbbz}{{\mathbb Z}}


\newcommand{\n}[1]{\| #1 \|}
\newcommand{\um}[1]{{\underline{#1}}}
\newcommand{\om}[1]{{\overline{#1}}}
\newcommand{\fl}[1]{\lfloor #1 \rfloor}
\newcommand{\ce}[1]{\lceil #1 \rceil}
\newcommand{\ncm}[2]
{{\left(\!\!\!\begin{array}{c}#1\\#2\end{array}\!\!\!\right)}}
\newcommand{\ncmf}[2]
{{\left[\!\!\!\begin{array}{c}#1\\#2\end{array}\!\!\!\right]}}
\newcommand{\ncms}[2]
{{\left\{\!\!\!\begin{array}{c}#1\\#2\end{array}\!\!\!\right\}}}

\def\ig{\includegraphics}
\def\iff{{\Leftrightarrow}}
\def\imp{{\Rightarrow}}
\def\to{{\ \rightarrow\ }}
\def\too{{\ \longrightarrow\ }}
\def\into{{\hookrightarrow}}
\def\st{\stackrel}
\def\rank{\text{rank\,}}
\def\mm{\mathfrak{m}}

\newtheorem{theorem}{Theorem}[section]
\newtheorem{lemma}[theorem]{Lemma}
\newtheorem{sit}[theorem]{}
\newtheorem{lemmadefinition}[theorem]{Lemma and Definition}
\newtheorem{proposition}[theorem]{Proposition}
\newtheorem{example}[theorem]{Example}
\newtheorem{question}[theorem]{Question}
\newtheorem{remark}[theorem]{Remark}
\numberwithin{equation}{section}
\newtheorem{corollary}[theorem]{Corollary}
\newtheorem{definition}[theorem]{Definition}
\newtheorem{conjecture}[theorem]{Conjecture}
\title{Disentanglements of corank 2 map-germs: two examples 
}
\author{David Mond}
\address{Mathematics Institute, University of Warwick, Coventry CV4 7AL}
\email{d.m.q.mond@warwick.ac.uk}

\date{\today}
\subjclass{14B05, 32S30, 32S25 }
\begin{abstract}We compute the homology of the multiple point spaces of stable perturbations of two germs $(\CC^n,0)\to (\CC^{n+1},0)$ of corank 2, using a variety of techniques based on the image computing spectral sequence ICSS. We provide a reasonably detailed introduction to the ICSS, including some low-dimensional examples of its use. The paper is partly expository.
\end{abstract}
\maketitle
\section{Introduction}
In studying a singularity of mapping from $n$-space to $(n+1)$-space, a r\^ole analogous to that of Milnor fibre is played by a stable perturbation of the singularity, and in particular by its image. The image of a map acquires non-trivial homology through the identification of points of the domain, and these identifications are encoded in the multiple point spaces of the map. For germs of corank 1, these multiple point spaces are well understood. For germs of corank $>1$ the situation is radically different. 

In this paper we study the multiple point spaces of stable perturbations of two map-germs of corank 2 from $n$-space to $(n+1)$-space. In one case $n=3$ and in the other $n=5$.  Previous work of Marar, Nu\~no-Ballesteros and Pe\~nafort, in \cite{mn},\cite{mnp} has explored the case where $n=2$. Increasing the dimension introduces new difficulties. Confronting these will require a range of new techniques. Our work here is a preliminary exploration. Following the invitation of the editors to provide an accessible account, we have expanded the preliminary material on multiple-point spaces, disentanglements, and the image computing spectral sequence ICSS,  our principle technical tool, and included, in Section \ref{Examples}, some examples of calculation using the ICSS. 

The first of the two corank 2 map-germs we look at is the  germ  of lowest codimension in Sharland's list (\cite{sharland14}) of weighted homogeneous corank 2 map-germs $(\CC^3,0)\to (\CC^4,0)$: 
\beq\label{deff}f_0:(\CC^3,0)\to (\CC^4,0),\quad  f_0(x,y,z)=(x,y^2+xz+x^2y,yz,z^2+y^3).\eeq
This has ${\mathcal A}_e$-codimension 18. The second is the simplest ${\mathcal A}_e$-codimension 1 corank 2 map germ,
\beq\label{526}f_0:(\CC^5,0)\to (\CC^6,0), \quad
f_0(x,y,a,b,c)=(x^2+ax+by,xy,y^2+cx+ay,a,b,c).\eeq

For each one, we calculate a number of (topological) homology groups with rational coefficients, related to its disentanglement.  By ``disentanglement'' we do not mean just the stable perturbation 
$$\xymatrix{f_t:U_t\ar@{->>}[r] &X_t}\subset\CC^{n+1}$$
of the germ $f_0$ (where $U_t$ is a contractible neighbourhood of $0$ in $\CC^n$), as the term has been used by de Jong and van Straten in \cite{dJvS} and by Houston in \cite{Hou97} and subsequent papers.  A richer picture is obtained by considering the ``semi-simplicial resolution''
\beq\label{ssr}
\xymatrix{\ldots \ar@<5pt>[d]\ar@<2pt>[d]\ar@<-2pt>[d]\ar@<-5pt>[d]\ar[dddrr]\\
D^{3}(f_t)\ar@<3pt>[d]\ar@<0pt>[d]\ar@<-3pt>[d]\ar[ddrr]\\
D^2(f_t)\ar@<2pt>[d]\ar@<-2pt>[d]\ar[drr]\\
U_t\ar@{->>}[rr]&&X_t
}
\eeq
Here, for each integer $2\leq k\leq n$, $D^k(f_t)$ is the closure, in $U_t^k$, of the set of $k$-tuples of pairwise distinct points (``strict'' k-tuple points) sharing the same image, and the $k$ distinct arrows $\pi^k_j:D^k(f_t)\to D^{k-1}(f_t)$, $1\leq j\leq k$, are the restriction of the projections 
$U_t^k\to U_t^{k-1}$ obtained by forgetting the $j$'th
factor in the product $U_t^k$. For $k>n$, an $\mathcal A$-finite mono-germ $f:(\CC^n,0)\to (\CC^{n+1},0)$ will have no strict $k$-tuple points, since the dimension of $D^k(f_0)$ at a strict $k$-tuple point is $n-k+1$ (see Subsection \ref{mpi} below). In this case $D^k(f_0)$ is defined by a slightly different procedure: we pick a stable unfolding $F:(\CC^n\times\CC^d,0)\to (\CC^{n+1}\times\CC^d,0)$ of $f_0$, define $D^k(F)$ as above, and then take $D^k(f_0)$ as the fibre over $0\in\CC^d$ of $D^k(F)$. We note that it is an easy consequence of the Mather-Gaffney criterion for
$\mathcal A$-finiteness that if we apply this second procedure when $k\leq n$, we get  the same space $D^k(f_0)$ as defined above. 


The disentanglement, in this wider sense, contains complete information about the way that points of $U_t$ are identified by $f_t$. The image $X_t$ has the homotopy type of a wedge of  $n$-spheres  (\cite{siersma}) whose number, the ``image Milnor number" of $f$, $\mu_I(f)$, is the key geometric invariant of an $\mathcal A$- finite germ $(\CC^n,0)\to (\CC^{n+1},0)$. Since the homology of $X_t$ arises through the identifications induced by $f_t$, it is better described by the information 
attached to the diagram \eqref{ssr}. This will become clearer in what follows. 

Note that the $\pi_j^k$ for fixed $k$ and different $j$ are left-right equivalent to one another thanks to the symmetric group actions on $D^k$ and $D^{k-1}$, permuting the copies of $U_t$. In what follows we will consider only $\pi^k_k$, which we will refer to simply as $\pi^k$. We will denote the image of $\pi^k$ in $D^{k-1}$ by $D^k_{k-1}$, and, more
generally, for $\ell<k$, we denote the image of $\pi^{\ell+1}\circ\cd\circ\pi^k$ in $D^\ell$ by $D^k_\ell$. 
\vsn

\begin{remark}\label{inter}{\em 
(1) 
Any finite map-germ $f:(\CC^n,S)\to (\CC^{n+1},0)$ is an embedding outside $D^2_1(f)$, which is the ``non-embedding locus'' of $f$. More generally each map $\pi^k:D^k(f)\to D^{k-1}(f)$ is an embedding outside $D^{k+1}_k(f)$, and each map $\pi^{k+1}$ parameterises the non-embedding locus of its successor $\pi^k$. Thus the tower \eqref{ssr} shows a strong analogy with a free resolution of a module. If $f$ is stable then 
$D^k(f)$, if not empty, is $n-k+1$-dimensional. It follows that the length of this resolution is at most $n$.
\vskip 10pt \ni
(2) For maps $M^n\to N^{n+1}$ with $n<6$ there  is no stable singularity of corank 2. Every  $\mathcal A$-finite germ is stable outside $0$, so if $n<6$, any singularity outside $0$ of an $\mathcal A$-finite germ $f_0:(\CC^n,0)\to (\CC^{n+1},0)$, must be of corank $1$.  For stable germs of corank 1, all non-empty multiple point spaces are smooth (\cite{marar-mond}). It follows that for any $\mathcal A$-finite germ $f_0:(\CC^n,0)\to (\CC^{n+1},0)$ with $n<6$, $D^k(f_0)$ has (at most) isolated singularity. It also follows that a stable perturbation $f_t$ has no singularities of corank $>1$. Therefore all of 
the non-empty multiple point spaces $D^k(f_t)$ are smooth  -- indeed, are smoothings of the  isolated singularities $D^k(f_0)$. For any map $f$, $D^\ell(\pi^k)$ can be identified with $D^{k+\ell-1}(f)$, by the obvious map 
$$\left((x_1,\ld, x_{k-1}, x_k^{(1)}),(x_1,\ld, x_{k-1}, x_k^{(2)}), \ld, (x_1,\ld, x_{k-1}, x_k^{(\ell)})\right)$$
\beq\label{poi}\longleftrightarrow \left(x_1,\ld,x_k^{(1)},x_k^{(2)},\ld, x_k^{(\ell)}\right)\eeq
 -- the left hand side here shows a point of $D^\ell(\pi^k)$, and the right hand side shows the corresponding point of $D^{k+\ell-1}(f)$. This observation is the basis of the ``method of iteration''  developed by Kleiman in \cite{kmulti1}. From the smoothness of the $D^{k+j}(f_t)$ therefore follows smoothness of the multiple-point spaces of the projections $\pi^k:D^k(f_t)\to D^{k-1}(f_t)$. The singularities of $\pi^k$ are all of corank 1; this can be seen quite easily by writing $f$ in linearly adapted coordinates, but see also \cite{am}. By the characterisation of the stability of corank 1 map-germs by the smoothness of their multiple point spaces (\cite{marar-mond}), it follows that provided $n<6$, {\it all of the projections $\pi^k$ are stable maps}. }
\end{remark}
{\it Acknowledgements} 
The calculations in Section \ref{3to4} were begun in collaboration with Isaac Bird, as part of his final year MMath project at Warwick. I am grateful to him for agreement to use them here, and for his enthusiasm on the project, which contributed significantly to its further development. I also thank Mike Stillman for help finding the $2\times 4$ matrices of Section \ref{shadow}. 
\subsection{Multiple points}\label{mpi}
If $f_0:(\CC^n,0)\to (\CC^{n+1},0)$ is $\mathcal A$-finite then the set of strict $k$-tuple points is dense in $D^k(f_0)$, unless $D^k(f_0)$ consists only of the point $(0,\ld,0)$. If $(x_1,\ld, x_k)$ is a strict $k$-tuple point, with
$f_0(x_i)=y$ for $i=1,\ld, k$, 
then by the Mather-Gaffney criterion for $\mathcal A$-finiteness, the images of the germs $f_0:(\CC^n,x_i)\to (\CC^{n+1},y)$ meet in general position. It follows that their intersection has dimension $n+1-k$. This is therefore
the dimension of $D^k(f_0)$, provided $k\leq n+1$, and of $D^k(f_t)$. If $k>n+1$ then because $f_t$ is stable,
$D^k(f_t)=\emptyset$.
\subsection{Alternating homology}\label{althom}
The developments in this section are due principally (but in some cases implicitly) to Goryunov in \cite{gor}.
\vsn
{\it Notation} For a continuous map $\pp:V\to W$, we denote by $\pp_\#$ the map $C_j(V)\to C_j(W)$
induced by $\pp$, and reserve the term $\pp_*$ for the corresponding map on homology.
\vsn
Suppose $f:X\to Y$ is surjective. Recall the action of $S_k$ on $D^k(f)$, permuting the copies of $X$. With $C_\bu(D^k(f))$ the usual singular chain complex, 
define
$$C_j^\alt(D^k(f))=\{c\in C_j(D^k(f)):\sigma_\#(c)=\text{sign}(\sigma) c\ \text{ for all }\sigma\in S_k\}.$$
This gives a subcomplex, as 
$\p_\#(C_j^\alt)\subset C^\alt_{j-1}, $
so we have {\it alternating homology}
$$H^\alt_j(D^k(f)).$$

\ni Now observe that also $\pi^k_\#:C_j^\alt(D^k(f))\subset C_j^\alt(D^{k-1}(f))$. To see this, let $\sigma\in S_{k-1}$, and define $\tilde\sigma\in S_k$ by setting $\tilde\sigma(i)=\sigma(i)$ for $1\leq i\leq k-1$ and $\tilde\sigma (k)=k$. Then $\text{sign}(\tilde\sigma)=\text{sign}(\sigma)$, and so if $c\in C^\alt_j(D^k(f))$ then
$$\sigma_\#(\pi^k_\#(c))=\pi^k_\#(\tilde\sigma_\#(c))=\pi^k_\#(\text{sign}(\tilde\sigma)c)=
\text{sign}(\sigma)\pi^k_\#(c).$$

In fact we have a double complex:  on $C_j^\alt(D^k(f))$, 
$\pi^{k-1}_\#\circ\pi^k_\#=0;$
 for 
$$\pi^{k-1}_\#\circ\pi^k_\#=\pi^{k-1}_\#\circ\pi^k_\#\circ(k,k-1)_\#,$$ 
and on alternating chains $(k,k-1)_\#$ is multiplication by $-1$. 
By same argument, $f_\#\circ \pi^2_\#=0.$ Thus, denoting $X$ by $D^1(f)$, $Y$ by $D^{-1}(f)$, and $f$ by $\pi^1$, we have
\begin{proposition}$(C_j^\alt(D^\bu(f)),\pi^\bu)$ is a complex, and 
$(C^\alt_\bu(D^\bu(f)), \p, (-1)^\bu\pi^\bu_\#)$ is a double complex. \eop
\end{proposition}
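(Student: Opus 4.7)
The plan is that this proposition is bookkeeping, and most of it is already contained in the paragraphs preceding the statement; I would just spell out the two remaining ingredients and the symmetry that underlies everything.

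For the first assertion, that $(C_j^\alt(D^\bu(f)),\pi^\bu_\#)$ is a complex, two things must be verified: (a) $\pi^k_\#$ carries $C_j^\alt(D^k(f))$ into $C_j^\alt(D^{k-1}(f))$, and (b) $\pi^{k-1}_\#\circ\pi^k_\#=0$ on alternating chains. Both are essentially done in the text above. The geometric heart of (b) is the identity of continuous maps
$$\pi^{k-1}\circ\pi^k\circ (k,k-1)=\pi^{k-1}\circ\pi^k\colon D^k(f)\to D^{k-2}(f),$$
which holds because both sides forget the last two entries of a $k$-tuple and $(k,k-1)$ permutes only those two. On an alternating chain $c$ the transposition $(k,k-1)_\#$ acts as $-1$, so $\pi^{k-1}_\#\pi^k_\#(c)$ equals its own negative; working over $\QQ$ this gives $\pi^{k-1}_\#\pi^k_\#(c)=0$. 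The bottom-row relation $f_\#\circ \pi^2_\#=0$ is the same argument with the transposition $(1,2)\in S_2$.

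For the double complex claim I would add two further compatibilities. First, $\p$ preserves alternating chains: naturality of $\p$ applied to each $\sigma\in S_k$ gives $\sigma_\#\circ\p=\p\circ\sigma_\#$, so $\p$ commutes with the projector onto the sign-isotypic subspace, and of course $\p^2=0$ on any chain complex. Second, continuity of each $\pi^k$ gives the unsigned commutation $\p\circ\pi^k_\#=\pi^k_\#\circ\p$; inserting the sign $(-1)^\bu$ on the horizontal maps then converts this into the Koszul-type anti-commutation required to make $(C_\bu^\alt(D^\bu(f)),\p,(-1)^\bu\pi^\bu_\#)$ a double complex in the standard sign convention.

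There is no real obstacle; the proposition is a formal setup. The only substantive point is the symmetry identity for two successive forgetful projections, and this is essentially visual. The payoff is that, once established, one has a well-defined total complex whose spectral sequences will be the source of the ICSS used throughout the paper.
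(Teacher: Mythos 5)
Your proposal is correct and follows essentially the same route as the paper, whose ``proof'' is precisely the discussion preceding the statement: the invariance of alternating chains under $\pi^k_\#$ via the extension $\tilde\sigma$ of $\sigma\in S_{k-1}$, and the vanishing of $\pi^{k-1}_\#\circ\pi^k_\#$ via the identity $\pi^{k-1}\circ\pi^k=\pi^{k-1}\circ\pi^k\circ(k,k-1)$ together with the fact that $(k,k-1)_\#$ acts by $-1$ on alternating chains. Your added checks---that $\p$ commutes with the $S_k$-action and hence preserves the sign-isotypic subcomplex, and that the sign $(-1)^\bu$ converts the commuting squares into anticommuting ones---are exactly the bookkeeping the paper leaves implicit (and note the cancellation $x=-x\Rightarrow x=0$ already works over $\ZZ$, since the chain groups are torsion-free, so the restriction to $\QQ$ is not needed there).
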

The relevance to the homology of the image can be seen from two short calculations. 
\vskip 10pt\ni Example 1:  let $c_j^2\in Z_j^\alt(D^2(f))$.
$$\xymatrix{D^2(f)&&0&c^2_j\ar[l]_{\p}\ar[d]\\
X&&0&\pi^2_{\#}(c^2_j)\ar[l]\ar[d]^{f_\#}&c^1_{j+1}\ar[l]^{\exists}_\p\ar[d]&&\text{e.g. if } H_j(X)=0\\
Y&&&f_\#\pi^2_\#(c^2_j)=0&f_\#(c^1_{j+1}){\ar[l]_\p}&}
$$
Because $f_\#\circ\pi^2_\#=0$ on alternating chains, $f_\#(c^1_{j+1})$ is a cycle in $Y$.  So from an alternating $j$-cycle $c^2_j$ in 
$D^2(f)$, we get a $j+1$ cycle on $Y$ -- provided $\pi^2_\#(c^2_j)$ is a boundary in $X$, i.e. provided $\pi^2_*[c^2_j]=0$ in $H_j(X)$.
\vskip 10pt
\ni Example 2: let $c_j^3\in Z_j(D^3(f))$. 
$$\xymatrix{
D^3(f)&0&c_j^3\ar[l]_\p\ar[d]\\
D^2(f)&0&\pi^3_\#(c^3_j)\ar[l]_\p\ar[d]&c^2_{j+1}\ar[l]_\p^{\exists}\ar[d]&&\text{provided } \pi^3_*[c^3_j]=0\in H^\alt_j(D^2(f))\\
X&&0&\pi^2_\#(c^2_{j+1})\ar[l]_\p\ar[d]&c^1_{j+2}\ar[l]^{\exists}_\p\ar[d]&\text{provided } \pi^2_*[c^2_{j+1}]=0\in H_{j+1}(X)\\
Y&&&0&f_\#(c^1_{j+2})\ar[l]_\p
}
$$
Here, a $j$-dimensional homology class in $D^3(f)$ leads to a $j+2$-dimensional class in $Y$, provided certain homology classes vanish. 
 
Note that in both cases, \begin{itemize}
\y If $c^k_j=\pi^{k+1}_\#(c^{k+1}_j)$ for $c^k_j\in C^\alt_j(D^{k+1}(f))$  then $\pi^k_\#(c^k_j)=0$. 
\y If $c^k_j=\p c^k_{j+1}$ for some $c^k_{j+1}\in C^\alt_{j+1}(D^k(f))$, then we can take $c_{j+1}^{k-1}=\pi^k_\#(c^k_{j+1})$
so the homology class we get in $H^\alt_{j+1}(D^{k-1}(f))$ is zero.
\end{itemize} 
So we are really interested in
$$\frac{\ker \pi^k_*:H^\alt_j(D^k(f))\to H_j^\alt(D^{k-1}(f))\quad \quad}{\im\, \pi^{k+1}_*:H^\alt_j(D^{k+1}(f))\to H_j^\alt(D^k(f))}$$
\subsection{The image-computing spectral sequence}
Lurking behind the two calculations we have just gone through is the 
{\it Image-computing spectral sequence}, ICSS. This was introduced in \cite{gm} and further developed in \cite{gor}. It calculates the homology of the image $X_t$ in terms of the alternating homology $H_*^\alt(D^k(f_t))$ of the multiple point spaces 
$D^k(f_t)$.  The version introduced in \cite{gm} worked with the subspace of $H_*(D^k(f);\QQ)$ on which 
$S_k$ acts by its sign representation:
$$\text{Alt\,} H_i(D^k(f;\QQ)=\{[c]\in H_i(D^k(f);\QQ):\sigma_*([c])=\text{sign}(\sigma)[c]\ \text{for all }\sigma\in S_k\}.$$ 
If we take the complex of alternating chains described in the last paragraph and replace integer coefficients by rational coefficients, then the two versions coincide:
$$\text{Alt\,}H_j(D^k(f);\QQ)=H^\alt_j(D^k(f);\QQ).$$  

The ICSS has $E^1_{p,q}=H^\alt_{q}(D^{p+1}(f_t))$ and converges to $H_{p+q}(X_t)$. The differential on the $E^1$ page, $d^1:E^1_{p,q}\to E^1_{p-1,q}$ is the simplicial differential $\pi_*^{p+1}:H_q^\alt(D^{p+1}(f_t))\to H_q^\alt(D^p(f_t))$. In \cite{gm}, a great deal hinges on the fact that for a stable perturbation $f_t$ of an $\mathcal A$-finite germ $f_0$ of corank 1, the $D^k(f_t)$ are Milnor fibres of the isolated complete intersection singularities $D^k(f_0)$ (see \cite{marar-mond}), and therefore their vanishing homology is confined to middle dimension. Since (over $\QQ$) $H_i^\alt(D^k(f_t))\subset H_i(D^k(f_t))$, the vanishing alternating homology of $D^k(f_t)$ is also confined to middle dimension. From this it follows, in the case of a stable perturbation of a mono-germ,  that the ICSS collapses at $E^1$: for all $r\geq 1$, $E^r_{p,q}=E^1_{p,q}$. The fact that the spectral sequence converges to $H_{p+q}(X_t)$ therefore means that, as $\QQ$-vector space, 
\beq\label{icsscor}H_n(X_t)\simeq H_{n-1}^\alt(D^2(f_t))\oplus H_{n-2}^\alt(D^3(f_t))\oplus\cd\oplus H_0^\alt(D^{n+1}(f_t)).\eeq 
The argument for collapse is as follows: for each space $D^{p+1}(f_t)$ there is at most one non-zero alternating homology group, $H_{n-p}^\alt(D^{p+1}(f_t))$, and therefore either the source or the target of every differential at $E^1$ is equal to $0$.  Thus $E^2_{p,q} = E^1_{p,q}$. The higher differentials $d^r:E^r_{p,q}\to E^r_{p-r,q+r-1}$ all vanish for exactly the same reason: for each one, either its source or its target is zero. 

Notice that this is exactly what is needed to justify the assumptions we made in our two calculations in the previous paragraph. Whenever $D^k(f_t)$ has non-trivial alternating homology in dimension $j$, then
$D^{k-1}(f)$ does not.

The situation for stable perturbations of multi-germs is slightly more complicated, as can be seen with the example of Reidemeister moves II and III in Section \ref{Reidemeister} below.  Here $D^k(f_t)$ may have more than one connected component, and hence have vanishing alternating homology in dimension $0$ as well as in middle dimension.  As the calculations with Reidemeister moves II and III show, the differentials  
$\pi^k_*:H^\alt_0(D^{p+1}(f_t))\to H_0^\alt(D^{p}(f_t))$ may not all be zero. 

From \eqref{icsscor} it follows that for a stable perturbation of a mono-germ
\beq\label{mooi}\mu_I(f)=\sum_{k=2}^{n+1}\text{rank}\,H_{n-k+1}^\alt(D^k(f_t)).\eeq
In \cite[Theorem 4.6]{Hou97}, Kevin Houston showed that if $f_t$ is a stable perturbation of an $\mathcal A$-finite mono-germ $f_0$, then the {\it alternating} homology of $D^k(f_t)$ is once again confined to middle dimension, even though the ordinary homology of $D^k(f_0)$ may not be \footnote{In fact for the stable perturbation $f_t$ of the germ $(\CC^5,0)\to (\CC^6,0)$  described below, both $D^2(f_t)$ and $D^3(f_t)$ have non-trivial homology below middle dimension.}. From Houston's theorem its follows that \eqref{icsscor} and 
\eqref{mooi} hold for stable perturbations of mono-germs of any corank. 

In both of our examples of corank 2 mono-germs, the multiplicity of $f_0$,
$$\dim_{\CC}\frac{\OO_{\CC^n,0}}{f_0^*\mm_{\CC^{n+1},0}\OO_{\CC^n,0}},$$
is equal to 3, so $f_t$ has no quadruple or higher multiple points, and \eqref{mooi} reduces to 
\beq\label{hgm}\mu_I(f_0)=\text{rank}\,H_{n-1}^\alt(D^2(f_t))+\text{rank}\,H_{n-2}^\alt(D^3(f_t)).\eeq

If $f_0:(\CC^n,0)\to (\CC^{n+1},0)$ is a germ with $\mu_I(f_0)=1$, then \eqref{mooi} 
implies that the vanishing homology of the image comes from just one of the multiple point spaces. It is an interesting project to determine, for each such $f_0$, which one this is. It is possible to show that {\it the answer depends only on the isomorphism class of the local algebra of $f_0$}. It is far from clear to me how to determine the answer from the local algebra.

\subsection{Symmetric group actions on the homology of the multiple point spaces}
From here on, and in the rest of the paper, we will consider only homology with rational coefficients, and by  $H_i(D^k(f_t))$ we will
mean always $H_i(D^k(f_t);\QQ)$.

Each multiple-point space $D^k(f_t)$ is acted upon by the symmetric group $S_k$, permuting the factors of $U_t^k$. The resulting representation of $S_k$ on $H_*(D^k(f_t);\QQ)$ splits as a direct sum of isotypal components, whose ranks are the principle numerical invariants of the disentanglement. We have
$$H_i(D^2(f_t))\simeq H_i^T(D^2(f_t))\oplus H_i^\alt(D^2(f_t)),$$ where the two summands  are the subspaces of $H_i(D^2(f_t))$ on which $S_2$ acts trivially, and by its sign representation, respectively, and 
$$H_i(D^3(f_t))=H_i^T(D_3(f_t))\oplus H_1^\alt(D^3(f_t))\oplus H_1^\rho(D^3(f_t)),$$ where now the summands correspond to the trivial, sign and irreducible degree 2 representation of $S_3$.

Let $M_k(f_0)$ and $M_k(f_t)$ denote the set of {\it target} $k$-tuple points of $f$ and $f_0$ respectively -- points with at least $k$ preimages, counting multiplicity. By e.g. \cite{MP89}, the germ $(M_k(f_0),0)$ is defined by the $(k-1)$'st Fitting ideal of the $\OO_{\CC^{n+1},0}$-module $f_{0*}(\OO_{\CC^n})_0$, that is,  the ideal generated by the $(m-k+1)\times(m-k+1)$ minors of the $m\times m$ matrix of a presentation of $f_{0*}(\OO_{\CC^n})_0$.  

\begin{lemma}\label{conl} Let $f_0:(\CC^n,0)\to (\CC^{n+1},0)$ have multiplicity $k$ and isolated instability, and suppose that $M_k(f_0)$ is non-singular. Let $f_t$ be a stable perturbation of $f_0$. Then $H_i^T(D^k(f_t))=0$ for all $i>0.$
\end{lemma}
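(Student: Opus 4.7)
The plan is to identify the trivial-isotypal summand $H^T_i(D^k(f_t))$ with $H_i(M_k(f_t);\QQ)$ via the quotient map, and then exploit the hypothesis that $M_k(f_0)$ is smooth to show that $M_k(f_t)$ is contractible.

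First, since $f_0$ has multiplicity $k$, for $t$ small $f_t$ has multiplicity at most $k$ on a suitable neighbourhood, so $M_{k+1}(f_t)=\emptyset$. The strict $k$-tuple points are dense in $D^k(f_t)$, and sending $(x_1,\ld,x_k)$ to $f_t(x_1)$ defines a finite surjection $\bar\pi:D^k(f_t)\to M_k(f_t)$ which is a principal $S_k$-covering over the strict $k$-tuple locus. This identifies $M_k(f_t)$ with the analytic quotient $D^k(f_t)/S_k$. By the standard averaging (transfer) argument, valid for any finite-group action in characteristic zero,
$$H^T_i(D^k(f_t);\QQ)=H_i(D^k(f_t);\QQ)^{S_k}\simeq H_i\bigl(D^k(f_t)/S_k;\QQ\bigr)=H_i(M_k(f_t);\QQ).$$

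Second, show $M_k(f_t)$ is contractible. Take a stable unfolding $F:(\CC^n\times\CC^d,0)\to (\CC^{n+1}\times\CC^d,0)$ of $f_0$; its $(k-1)$'st Fitting-ideal locus $M_k(F)\subset\CC^{n+1}\times\CC^d$ has fibre $M_k(f_t)$ over $t\in\CC^d$. Since $M_k(F)$ is Cohen--Macaulay of the expected codimension (as shown, for instance, in \cite{MP89}), smoothness of a fibre at a point is equivalent to smoothness of the total space at that point. Our hypothesis gives smoothness of $M_k(F)$ at $(0,0)$, so shrinking the polydisc in $\CC^d$ we may assume smoothness throughout, with the projection to $\CC^d$ a submersion near the origin. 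Ehresmann's theorem then provides $C^\infty$-triviality of the family over a small polydisc, so $M_k(f_t)$ is diffeomorphic to a small neighbourhood of $0$ in $M_k(f_0)$, which is contractible, and the conclusion follows.

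The main obstacle is the second step, particularly the justification that smoothness of the central fibre together with the Cohen--Macaulay property propagate smoothness to the total space of the family. The identification of $D^k/S_k$ with $M_k$ used in the first step is more or less standard once one has the Fitting-ideal description of $M_k$ in hand, and the transfer argument is routine.
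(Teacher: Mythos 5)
Your argument is correct and follows essentially the same route as the paper: identify $M_k(f_t)$ with $D^k(f_t)/S_k$ using the absence of $(k+1)$-tuple points, apply the transfer isomorphism to get $H_i^T(D^k(f_t))\simeq H_i(M_k(f_t);\QQ)$, and deduce contractibility of $M_k(f_t)$ from the smoothness of $M_k(f_0)$. The paper states these steps more tersely, while you supply the flatness/Ehresmann justification for the last step (note only that ``equivalent'' overstates the smoothness transfer between fibre and total space --- you only need, and only use, the implication from smooth fibre to smooth total space).
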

\begin{proof} Because the multiplicity of $f_0$ is $k$, $f_t$ has no $(k+1)$-tuple points, and it follows that $M_k(f_t)
\simeq D^k(f_t)/S_k$, and therefore $H_i(M_k(f_t))\simeq H_i^T(D^k(f_t)).$ Because $M_k(f_0)$ is smooth, $M_k(f_t)$ is contractible, and  the result follows.
\end{proof}

Lemma \ref{conl}, with $k=3$,  applies to both of the germs we consider. Smoothness of $M_3(f_t)$ can be seen in each case  by considering a presentation of $f_{0*}(\OO_{\CC^n})_0$.

Suppose $f$ has corank $>1$. We have no closed formula for  generators of the ideal defining $D^k(f)$ for $k\geq 3$, but for $D^2(f)$ there is a formula, introduced in \cite{Mon87}, for germs of any corank. The ideal $(f\times f)^*(I_{\Delta_{n+1}})$ obtained by pulling back the ideal defining the diagonal in $\CC^{n+1}\times\CC^{n+1}$ vanishes on
$D^2(f)$, but also on the diagonal in $\Delta_n\subset\CC^n\times \CC^n$.  To remove $\Delta_n$ and leave only 
the points in the closure of the set of strict double points, we proceed as follows.  The ideal $(f\times f)^*(I_{\Delta_{n+1}})$,
generated by $f_i(x^{(1)})-f_i(x^{(2)})$, for $i=1,\ld, n+1$, is contained in $I_{\Delta_n}$, which is generated by 
$x_j^{(1)}-x_j^{(2)}, j=1,\ld, n$. Thus for $i=1,\ld,n+1$ there are functions $\a_{ij}(x^{(1)},x^{(2)})$ such that
$$f_i(x^{(1)})-f_i(x^{(2)})=\sum_{j=1}^n\a_{ij}(x^{(1)},x^{(2)})\left(x^{(1)}_j-x^{(2)}_j\right).$$
The $(n+1)\times n$ matrix $\a=(\a_{ij})$ restricts to the jacobian matrix of $f$ on $\Delta_n$. We take
$$I_2(f)=(f\times f)^*(I_{\Delta_{n+1}})+\text{min}_n(\a).$$
 
\begin{lemma}\label{gend2} Let $f_0:(\CC^n,0)\to (\CC^{n+1},0)$ be $\mathcal A$- finite and not an immersion. Then 
$D^2(f_0)$, as defined by $I_2(f_0)$, is Cohen-Macaulay of dimension $n-1$, and normal. \eop
\end{lemma}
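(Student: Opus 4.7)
The plan is to split the statement into three parts — dimension, Cohen--Macaulayness, and normality — and address them separately, the middle part being the heart of the argument and essentially the content of \cite{MP89}. First I would analyse the set-theoretic structure of $V(I_2(f_0))$. Off the diagonal $\Delta_n\subset\CC^n\times\CC^n$, the vanishing of $\a\mathbf{u}$ together with $\mathbf{u}\neq 0$ forces the columns of $\a$ to be linearly dependent, so the $n\times n$ minors vanish automatically; thus $V(I_2(f_0))\setminus\Delta_n$ is precisely the strict double-point locus, of dimension $n-1$ by the Mather--Gaffney transversality argument recalled in Subsection~\ref{mpi}. On $\Delta_n$ the matrix $\a$ restricts to $df_0$, so $V(I_2(f_0))\cap\Delta_n$ equals the singular locus $\Sigma(f_0)$, which has dimension at most $n-1$ since $f_0$ is $\mathcal A$-finite and not an immersion (were $\Sigma(f_0)$ to have dimension $n$, $f_0$ would fail to be generically an embedding and hence could not be $\mathcal A$-finite). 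Hence $\dim V(I_2(f_0))=n-1$, codimension $n+1$ in $\CC^{2n}$.

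For Cohen--Macaulayness, writing $R=\OO_{\CC^{2n},0}$, the plan is to exhibit an explicit free $R$-resolution of $R/I_2(f_0)$ of length $n+1$. The Mond--Pellikaan construction \cite{MP89} assembles such a resolution by interlocking the Koszul complex of $\mathbf{u}\in R^n$ with a Buchsbaum--Rim-type complex for the map $\a:R^n\to R^{n+1}$, organised to reflect the decomposition $I_2(f_0)=(\a\mathbf{u})+I_n(\a)$; its acyclicity is controlled precisely by the codimension bound already in hand. The Auslander--Buchsbaum formula then gives $\text{depth}\,R/I_2(f_0)=2n-(n+1)=n-1$, matching the Krull dimension, so $R/I_2(f_0)$ is Cohen--Macaulay.

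Normality then follows from Serre's $(R_1)+(S_2)$ criterion; $(S_2)$ is immediate from Cohen--Macaulayness. For $(R_1)$ I would use $\mathcal A$-finiteness to argue that $f_0$ is stable outside $0\in\CC^n$, and in the range of $n$ relevant for this paper all stable singularities have corank at most $1$ (Remark~\ref{inter}(2)). Then, by the Marar--Mond characterisation of corank-$1$ stability in terms of smoothness of multiple-point spaces, $V(I_2(f_0))$ is smooth at every point $(x^{(1)},x^{(2)})$ mapping under $(f_0,f_0)$ to some $y\neq 0$; the singular locus of $V(I_2(f_0))$ is therefore contained in $(f_0^{-1}(0))^2 = \{(0,0)\}$, of dimension $0$, and hence of codimension $n-1\geq 2$ in $V(I_2(f_0))$ as soon as $n\geq 3$.

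The main obstacle is the Cohen--Macaulay step. The two ideals $(\a\mathbf{u})$ and $I_n(\a)$ individually define subschemes of the wrong codimension --- the Koszul part cuts out the full $n$-dimensional diagonal, while by the Eagon--Northcott bound $I_n(\a)$ has codimension at most $2$ --- so the target codimension $n+1$ of their sum is only reached through subtle interaction between the two generating sets, and a free resolution of length exactly $n+1$ cannot be obtained from either ingredient in isolation. The corank-$1$ simplifications, under which $I_2(f_0)$ becomes a complete intersection generated by a regular sequence after a suitable change of coordinates, do not apply in the corank-$2$ setting considered in this paper, so the full Mond--Pellikaan construction --- or an equivalent replacement --- is genuinely needed.
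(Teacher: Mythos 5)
The first thing to say is that the paper does not actually prove this lemma: the statement carries an immediate end-of-proof box, and the surrounding text simply attributes the Cohen--Macaulayness to folklore, ``recently written up carefully'' in \cite{NuPe15}, with normality then asserted for $n=3$. So your proposal is being measured against a citation, not an argument. Your set-theoretic analysis of $V(I_2(f_0))$ (strict double points off $\Delta_n$, the singular locus $\Sigma(f_0)$ on it) and your Serre-criterion route to normality are sound in outline. But the central step --- Cohen--Macaulayness --- is a genuine gap as written. You assert that \cite{MP89} ``assembles'' a free resolution of $R/I_2(f_0)$ of length $n+1$ by interlocking a Koszul complex with a Buchsbaum--Rim-type complex, yet \cite{MP89} concerns the \emph{target} multiple-point spaces $M_k(f)$ defined by Fitting ideals of $f_*\OO_{\CC^n}$; it does not treat the source double-point scheme $V(I_2(f))\subset\CC^{2n}$. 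The correct reference is precisely \cite{NuPe15}, and your own closing paragraph concedes that neither ingredient yields the resolution in isolation and that the ``interaction between the two generating sets'' is subtle --- which is to say, you have described what a proof would need without supplying it. Relatedly, your dimension count only bounds $\dim V(I_2(f_0))$ from above by $n-1$; the lower bound $\dim\geq n-1$ (equivalently, that every component has codimension at most $n+1$) comes from the determinantal structure of $I_2$, is exactly the other half of the generic-perfection argument, and is not addressed.

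Two further points on normality. First, your $(R_1)$ argument invokes corank $\leq 1$ of the stable singularities off the origin, hence is confined to $n<6$, and needs $n\geq 3$ for the singular locus $(f_0^{-1}(0))^2$ to have codimension $\geq 2$ in $D^2(f_0)$; it also tacitly uses that the Marar--Mond divided-difference scheme structure of \cite{marar-mond}, whose smoothness you quote, coincides with the $I_2$ scheme structure. Second, and more tellingly, the restriction to $n\geq 3$ is not an artefact of your method: the lemma as stated is false for $n=2$, since in the paper's own Example I the germ $(x,y)\mapsto(x,y^3,xy+y^5)$ has $D^2(f)$ an $A_1$ curve singularity, which is Cohen--Macaulay but not normal. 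So your proof, where it works, correctly identifies the hypotheses under which the normality claim actually holds ($n\geq 3$, and as presented $n\leq 5$), which is all the paper uses; but it does not, and cannot, establish the lemma in the generality in which it is stated.
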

The proof of Cohen-Macaulayness has been part of the folklore for some time, but has recently been written up carefully by Nu\~no-Ballesteros and Pe\~nafort in \cite{NuPe15}.
When $n=3$, $D^2(f_0)$ is therefore a normal surface singularity, and so by the Greuel-Steenbrink theorem, \cite[Theorem 1]{GrSt81}, $H_1(D^2(f_t))=0$.
\subsection{Calculating $\mu_I(f)$}\label{mui}
Let $f_0:(\CC^n,0)\to (\CC^{n+1},0)$ have finite codimension and let 
$$F:\left(\CC^n\times\CC^d,(0,0)\right)\to \left(\CC^{n+1}\times \CC^d,(0,0)\right), \quad F(x,u)=(f_u(x),u)$$
 be a versal deformation. If $G$ is a reduced equation for the image of $F$ then for $u\in \CC^d$,
 $g_u:=G(\_,u)$ is a reduced equation for the image of $f_u$. By a theorem of Siersma (\cite{siersma}), the image of $g_u$ has the homotopy type of a wedge of $n$-spheres, whose number is equal to the number of critical points of $g_u$ (counting multiplicity) which move off the zero level as $u$ leaves $0$. Note that the number of $n$-spheres is, by definition, the image Milnor number $\mu_I(f_0)$. We can therefore calculate $\mu_I(f_0)$ as follows: define the relative jacobian ideal $J^{\text{rel}}_G$ by
 $$J^{\text{rel}}_G=\left(\frac{\p G}{\p y_1},\ld, \frac{\p G}{\p y_{n+1}}\right)$$
 where $y_1,\ld,y_{n+1}$ are coordinates on $(\CC^{n+1},0)$. The relevant critical points of the functions $g_t$ together make up the residual components of $V(J^{\text{rel}}_G)$ after removal of its components lying in $\{G=0\}$. This residual set can be found as the zero-locus of the saturation  $(J^{\text{rel}}_G:G^\infty)$, defined as 
 $$\bigcup_{k\in\NN}\{h\in \OO_{\CC^{n+1}\times\CC^d,(0,0)}: hG^k\in J^{\text{rel}}_G\}.$$
 
 We denote the zero locus of $(J^{\text{rel}}_G:G^\infty)$ by $\Sigma$. Thus the image Milnor number $\mu_I(f_0)$ is the degree of the projection $(\Sigma,0)\to( \CC^d,0)$. This degree can be calculated as the intersection number $\left(\Sigma, \CC^{n+1}\times\{0\}\right)_{(0,0)}$.  If $\Sigma$ is 
 Cohen-Macaulay then
 \beq\label{keq}
 \mu_I(f_0)=\left(\Sigma, \CC^{n+1}\times\{0\}\right)_{(0,0)}=\dim_{\CC}\left(\frac{\OO_{\CC^{n+1}\times \CC^d,(0,0)}}
 {(J^{\text{rel}}_G:G^\infty)+(u_1,\ld,u_d)}\right)
 \eeq
 where $u_1,\ld,u_d$ are coordinates on $(\CC^d,0)$. 
 
 In both of the examples considered here, this is the case, and it is a straightforward {\it Macaulay2} calculation to follow this procedure (including to check the Cohen-Macaulayness of $\Sigma$) and find $\mu_I(f_0)$: it is $18$ for the germ $(\CC^3,0)\to (\CC^4,0)$, and $1$ for the germ $(\CC^5,0)\to (\CC^6,0)$. 
 
 If $\Sigma$ is not Cohen-Macaulay, the intersection number can be calculated using Serre's {\it formule clef}, \cite{serre}, which we use to calculate a related intersection number in Subsection \ref{a1d2} below. 
 
\begin{remark}{\em The method outlined here gives no hint to any relation between $\mu_I(f_0)$ and the ${\mathcal A}_e$-codimension
 of $f_0$. It is conjectured that provided $(n,n+1)$ are nice dimensions, the standard ``Milnor-Tjurina" relation holds, namely
 \beq\label{miltjur}{\mathcal A}_e\text{-codim }f_0\leq \mu_I(f_0)\eeq
 with equality if $f_0$ is weighted homogeneous. In \cite{mond15} another slightly more complicated method for calculating $\mu_I$ is explained, with a similar case-by-case justification --  verification of the Cohen Macaulayness of  a certain relative $T^1$ module,  $T^{1\ \text{rel}}_{{\mathcal K}_{h,e}}i$,  and consequent conservation of multiplicity. The virtue of this second method is that the relation \eqref{miltjur} is an immediate consequence, whenever Cohen-Macaulayness of the relative $T^1$ can be shown, since $T^1_{{\mathcal A}_e}f_0$ is a quotient of  $T^{1}_{{\mathcal K}_{h,e}}i_0$
 }
 \end{remark}

\subsection{Examples}\label{Examples}
\subsubsection*{Example I: the ICSS for a stable perturbation of $f(x,y)=(x,y^3,xy+y^5)$.}
Here we apply the calculations described in \ref{althom}
to a stable perturbation of the germ of the title of this subsection, of type $H_2$. For any map-germ $f:(\CC^2,0)\to (\CC^3,0)$ of the form 
$f(x,y)=(x,f_2(x,y), f_3(x,y))$,  $D^2(f_t)$ is defined in $(x,y_1,y_2)$-space by the equations (see \cite{marar-mond})
\beq\label{d2h2}
\frac{\left|\begin{array}{cc}1&f_i(x,y_1)\\1&f_i(x,y_2)\end{array}\right|}{\left|\begin{array}{cc}1&y_1\\1&y_2\end{array}\right|}\quad i=2, 3
\eeq 
and $D^3(f)$ is defined in $(x,y_1,y_2,y_3)$-space by the equations
\beq\label{d3h2}
\frac{\left|\begin{array}{ccc}1&f_i(x,y_1)&y_1^2\\1&f_i(x,y_2)&y_2^2\\1&f_i(x,y_3)&y_3^2\end{array}\right|}{\left|\begin{array}{ccc}1&y_1&y_1^2\\1&y_2&y_2^2\\1&y_3&y_3^2\end{array}\right|},\quad
\frac{\left|\begin{array}{ccc}1&y_1&f_i(x,y_1)\\1&y_2&f_i(x,y_2)\\1&y_3&f_i(x,y_3)\end{array}\right|}{\left|\begin{array}{ccc}1&y_1&y_1^2\\1&y_2&y_2^2\\1&y_3&y_3^2\end{array}\right|}\quad i=2, 3.
\eeq
In this case these give
$$y_1^2+y_1y_2+y_2^2,\quad x+y_1^4+y_1^3y_2+y_1^2y_2^2+y_1y_2^3+y_2^4$$
for $D^2(f)$ and
$$P_2(y_1,y_2,y_3),\quad y_1+y_2+y_3,\quad x+ P_4(y_1,y_2,y_3),\quad P_3(y_1,y_2,y_3)$$
for $D^3(f)$, where each $P_j$ is a symmetric polynomial of degree $j$. Thus $D^2(f)$ is an $A_1$ curve singularity and
$D^3(f)$ is a non-reduced point of multiplicity 6. If $f_t$ is a stable perturbation then $D^2(f_t)$ is a Milnor fibre of the $A_1$ singularity, homotopy equivalent to a circle, and $D^3(f_t)$ consists of 6 points forming a single $S_3$-orbit. By judicious choice of parameter values $u$ and $v$ in the miniversal deformation $f_{u,v}(x,y)=(x,y^3+uy,xy+y^5+vy^2)$ (see \cite{grp}), one can arrange that the {\it real} picture of $D^2(f_t)$ and $D^3(f_t)$, and their projections $D^3_1(f_t), D^2_1(f_t)$, are as shown in the following diagram. 
\vs
\cl{\ig[width=4in]{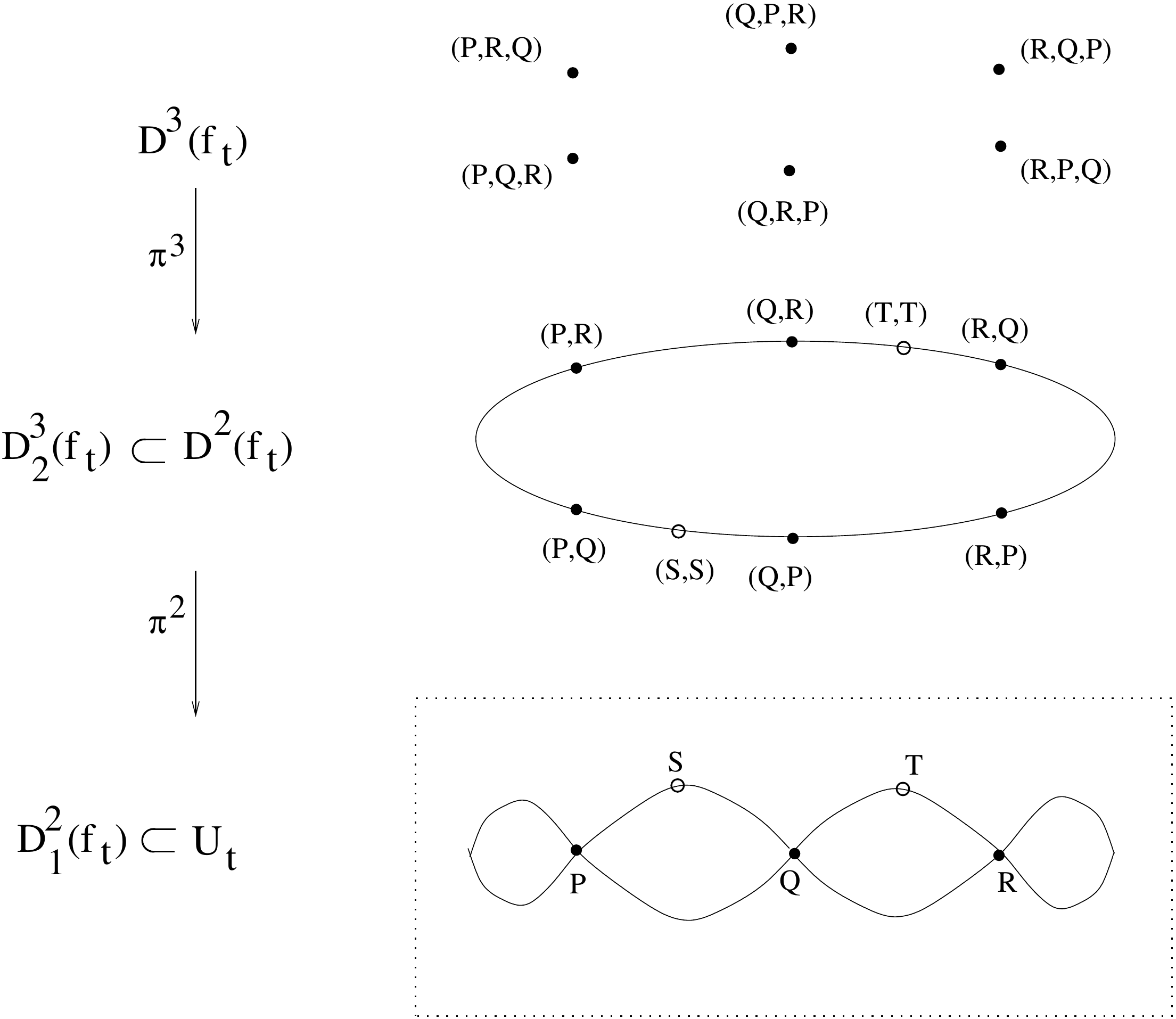}}
\vs
Here $S$ and $T$ in $U_t$ are the non-immersive points of $f_t$. At each, the germ of $f_t$ is equivalent to the parametrisation of the Whitney umbrella, $(x,y)\mapsto (x,y^2,xy)$, since this is the only stable non-immersive germ in this dimension range. The non-strict double points $(S,S)$ and $(T,T)$ are the fixed points of the involution $(1,2)$ on $D^2(f_t)$, which, in our picture, is induced by a reflection in the straight line joining them. 

As a single faithful $S_3$-orbit, $D^3(f_t)$ carries an alternating cycle, 
$$c_0=(P,Q,R)-(P,R,Q)+(R,P,Q)-(R,Q,P)+(Q,R,P)-(Q,P,R).$$
The projection of this cycle to $D^2(f_t)$, $\pi^3_\#(c_0)$, is an alternating boundary: for instance
$$\pi^3_\#(c_0)= (P,Q)-(P,R)+(R,P)-(R,Q)+(Q,R)-(Q,P)=\p(c_1)$$
where $c_1$ is the alternating 1-chain 
$$[(T,T)(Q,R)]-[(T,T)(R,Q)]+[(Q,P)(R,P)]-[(P,Q)(P,R)]$$
(here, for any two non-antipodal points $A,B\in D^2(f_t)$, $[A,B]$ denotes the singular 1-simplex parametrising the shorter arc from
$A$ to $B$). 
The projection of $c_1$ to $U_t$ is a 1-cycle in
$U_t$, and, as can be seen, is the boundary of the 2-chain $c_2$ shown in the diagram as the shaded region. And by
the argument above,
$f_{t\#}(c_2)$ is a cycle in the image $X_t$, indeed one of the two generators of $H_2(X_t)$.  Another generator comes from the alternating 1-cycle $c_1'$ on $D^2(f_t)$ consisting of the anticlockwise arc $[(S,S)(T,T)]$ minus the clockwise arc $[(S,S)(T,T)]$. 
I encourage the reader to find a 2-chain $c_2'$ on on $U_t$ such that $\p c_2'=\pi^2_\#(c_1').$ 

\subsubsection*{Example II: the Reidemeister moves}\label{Reidemeister}
The Reidemeister moves of knot theory are versal deformations of the three ${\mathcal A}_e$-codimension 1 singularities of mappings from the line to the plane. It is instructive to look at their disentanglements (in the sense described above), and at the resulting ICSS. 
The codimension 1 germs are shown in the middle column of the table below, and the right hand column shows a 1-parameter versal deformation, which, fixing $t\neq 0$, gives a stable perturbation. 
\beq\label{rei123}
\begin{array}{|l||c|c|}
\hline
&&\\
I&f_0:x\mapsto (x^2,x^3)&f_t: x\mapsto (x^2,x^3-tx)\\
\hline
&&\\
II&f_0:\left\{\begin{array}{l}x\mapsto (x,\ \ x^2)\\x'\mapsto (y,-y^2)\end{array}\right.&f_t:\left\{
\begin{array}{l}x\mapsto (x, x^2-t)\\y\mapsto (y,-y^2)\end{array}\right.\\
\hline
&&\\
III&f_0:\left\{\begin{array}{l}x\mapsto (x,\ \ x)\\
y\mapsto (y,\ \ 0)\\
z\mapsto (z,-z)
\end{array}\right.&f_t:\left\{\begin{array}{l}x\mapsto (x,\ \ x)\\
y\mapsto (y,\ \ t)\\
z\mapsto (z,-z)
\end{array}\right.\\
\hline
\end{array}
\eeq
For all three cases, the non-trivial modules in the $E^1$ page of the ICSS for $f_t$ are contained in the single column
\beq\label{icrm}\xymatrix{0\ar[d]\\H_0^\alt(D^3(f_t))\ar[d]^{\pi^3_*}\\H_0^\alt(D^2(f_t))\ar[d]^{\pi^2_*}\\H_0(U_t)\ar[d]\\0}\eeq
\subsection*{Reidemeister I}
Take $F:(x,t)\mapsto (t,f_t(x))$ as stable unfolding. Since in order that $F(t_1,x_1)=F(t_2,x_2)$, we must have $t_1=t_2$, we can embed $D^2(F)$ in $\CC^3$ with coordinates $t,x_1,x_2$. There, following the recipe preceding Lemma \ref{gend2} above, we find that
$D^2(F)$ is defined by the equations 
\beq\label{D2I}\frac{x_1^2-x^2_2}{x_1-x_2}=\frac{ x_1^3-tx_1-(x_2^3-tx_2)}{x_1-x_2}=0.\eeq
 Simplifying, this gives 
\beq\label{D2Ia}x_1+x_2=0\quad x_1^2+x_1x_2+x_2^2=t.\eeq
Thus $D^2(f_0)$ is a 0-dimensional $A_1$ singularity. Setting $t>0$ for a good real picture, and denoting $\sqrt{t}$ by $P$
and $-\sqrt{t}$ by $Q$, $D^2(f_t)$ is its Milnor fibre, the point-pair $\{(P,Q), (Q,P) \}$. Then for $t\neq 0$,
$H_0^\alt(D^2(f_t))\simeq\QQ$, generated by the class of $[(P,Q)]-[(Q,P)]$. Note that $H_0^\alt(D^2(f_0))=0$, since when
$t=0$, $P=Q$. 
For both $f_0$ and $f_t$, $D^3=\emptyset$.

In the $E^1$ page \eqref{icrm}, $H_0^\alt(D^3(f_t))=0$. We have $\pi_*^2=0$, for $\pi^2_*\bigl([(P,Q)]-[(Q,P)]\bigr)=[P]-[Q]$, and $U_t$ is connected, so that $[P]=[Q]$.  Hence the spectral sequence collapses at $E^1$, and for $t\neq 0$
$$H_0(X_t)=H_0(U_t)=\QQ, \quad H_1(X_t)=H_0^\alt(D^2(f_t))=\QQ.$$ 
\subsection*{Reidemeister II} Here both branches of the bi-germ $f_0$ are immersions, so all multiple points
are strict.  Denote by $0_x$ and $0_y$ the origins of the coordinate systems with coordinates $x$ and $y$ respectively.  The domain of the stable perturbation $f_t$ is a disjoint union $U_t=U_{x,t}\cup U_{y,t}$, where $U_{x,t}$ is a contractible
neighbourhood of $0_x$ and $U_{y,t}$ is a contractible neighbourhood of $0_y$. Thus $H_0(U_t)\simeq\QQ^2$. There are no triple points, and $D^2(f_t)$ consists of
\beq\label{r2d2}
\{(x,y)\in (\CC,0_x)\times(\CC,0_y):x=y,  x^2-t=-y^2\}\eeq
together with its image under the involution $(1,2)$ sending $(x,y)$ to $(y,x)$. When $t=0$ this is a pair of
$0$-dimensional $A_1$ singularities, interchanged by $(1,2)$. To describe $D^2(f_t)$ for $t\neq 0$, denote the points in $(\CC,0_x)$ with $x$ coordinates $\sqrt{t/2}$ and $-\sqrt{t/2}$ by $P_x$ and $Q_x$ respectively, and the points in $(\CC,0_y)$ with $y$ coordinates $\sqrt{t/2}$ and $-\sqrt{t/2}$ by $P_y$  and $Q_y$.
Then for $t\neq 0$,
\beq D^2(f_t)=\{(P_x, P_y), (P_y, P_x), (Q_x, Q_y), (Q_y, Q_x)\},\eeq
with the involution $(1,2)$ interchanging the first and second points, and the third and fourth.
For $t=0$, this collapses just to 
$$D^2(f_0)=\{(0_x,0_y),(0_y,0_x)\}.$$
Thus for $t\neq 0$, $H_0^\alt(D^2(f_t))$ is two-dimensional, with basis
$[(P_x,P_y)]-[(P_y,P_x)],\ [(Q_x,Q_y)]-[(Q_y,Q_x)]$, and for $t=0$, $H_0^\alt(D^2(f_0))$ has basis 
$[(0_x,0_y)]-[(0_y,0_x)]$. 
With respect to the basis of $H^\alt_0(D^2(f_t))$ described above, and the basis $[P_x], [P_y]$ for $H_0(U_t)$,  
$\pi^2_*$
 has matrix $\pmat 1&1\\-1&-1\emat$ when $t\neq 0$, and thus has 1-dimensional kernel and cokernel. The spectral sequence collapses at $E^2$, and 
 $$H_1(X_t)=E^2_{1,0}=\ker \pi^2_*\simeq\QQ,\quad  H_0(X_t)=E^2_{0,0}=\coker \pi^2_*\simeq\QQ.$$
\subsection*{Reidemeister III}
We use the same conventions as for Reidemeister II.  Let $P_x$ and $P_y$ denote the points in $U_{x,t}$ and $U_{y,t}$ with $x$ and $y$ coordinate  $t$, and let $Q_y$ and $Q_z$ denote the points in $U_{y,t}$ and $U_{z,t}$ with $y$ and $z$ coordinate $-t$. Note that when $t=0$, then $P_x=0_x$, etc. Then
\beq
\begin{split}
D^2(f_t)\cap \left(U_{x,t}\times U_{y,t}\right)=\{(P_x,P_y)\}\\
D^2(f_t)\cap \left(U_{x,t}\times U_{z,t}\right)=\{(0_x,0_y)\}\\
D^2(f_t)\cap \left(U_{y,t}\times U_{z,t}\right)=\{(Q_x,Q_z)\}
\end{split}
\eeq
and 
$$D^3(f_0)\bigcap U_{x,t}\times U_{y,t}\times U_{z,t}=\{(0_x,0_y,0_z)\}.$$
Thus
\beq
\begin{array}{cc}H_0^\alt(D^3(f_0))\simeq\QQ &H_0^\alt(D^3(f_t))=0\\
\\
H_0^\alt(D^2(f_0))\simeq\QQ^3&H_0^\alt(D^2(f_t))\simeq\QQ^3\\
\\
H_0(U_0)\simeq\QQ^3&H_0(U_t)\simeq\QQ^3
\end{array}
\eeq
with bases shown in the following table.
$$
\begin{array}{|c|c|}
\hline
\text{Module}&\text{Basis}\\
\hline
&\\
H_0^\alt(D^3(f_0))&[(0_x,0_y,0_z)]-[(0_x,0_z,0_y)]+[(0_z,0_x,0_y)]-[(0_z,0_y,0_x)]+[(0_y,0_z,0_x)]-[(0_y,0_x,0_z)]\\
\hline
&\\
H_0^\alt(D^2(f_t))&[(P_x,P_y)]-[(P_y,P_x)],\ -[(0_x,0_z)]+[(0_z,0_x)],\ [(Q_y,Q_z)]-[(Q_z,Q_y)]\\
\hline
&\\
H_0(U_t)&[0_x]=[P_x],\ [P_y]=[Q_y],\ [Q_z]=[0_z]\\
\hline
\end{array}
$$
With respect to these bases, the differentials $\pi^k_*$ have the following matrices (with the first only for $t=0$):
$$
\begin{array}{|c||c|}
\hline 
&\\
\pi^3_*=\pmat 1\\1\\1\emat&
\pi^2_*=\pmat 1&-1&0\\-1&0&1\\0&1&-1\emat\\
\hline
\end{array}
$$
In the spectral sequence for $f_0$, the image of $\pi^3_*$ kills the kernel of $\pi^2_*$. When $t\neq 0$,
$D^3$ vanishes, along with its homology, while $H_0^\alt(D^2(f_t))$ remains unchanged. The spectral sequence collapses at $E^2$, and
$$H_1(X_t)=E^2_{1,0}=\ker \pi^2_*\simeq\QQ,\quad  H_0(X_t)=E^2_{0,0}=\coker \pi^2_*\simeq\QQ.$$  
\section{New examples: disentanglements of two germs of corank 2}
\subsection{Summary of results}
\subsubsection{$\mathbf {f_0:(\CC^3,0)\to (\CC^4,0),\quad f_0(x,y,z)=(x,y^2+xz+x^2y,yz,z^2+y^3)}$}

The rows of the following table show relations between the ranks of the isotypal subspaces of the homology groups of $D^2(f_t)$ and $D^3(f_t)$ and of the homology groups of their projections to $U_t$, $D^2_1(f_t)$ and $D^3_1(f_t)$, and $VD_\infty$, the number of Whitney umbrellas on $D^2_1(f_t)$, which plays a crucial role in our calculation. The left hand column shows where in the paper the calculation is made. Blank spaces indicate zeros.
\def\aalt{\text{\tiny Alt}}

\beq\label{bigmat}
\begin{array}{|r || c | c | c | c | c | c | c | c || l|}
\hline
\text{datum}&\hskip -.1cm H_2^T(D^2)\hskip -.1cm&\hskip -.1cm H_2^\aalt(D^2)\hskip -.1cm&\hskip -.1cm H_2(D^2_1)\hskip -.1cm&\hskip -.1cm H_1^T(D^3)\hskip -.1cm&\hskip -.1cm H_1^\aalt(D^3)\hskip -.1cm&\hskip -.1cm H_1^\rho(D^3)\hskip -.1cm&\hskip -.1cm 
H_1(D^3_1)\hskip -.1cm&\hskip -.1cm VD_\infty\hskip -.1cm&\\
\hline
\hline
\eqref{hgm}& &1 & & &1 & & &  &=18  \\
\hline
\delta(D^3_1)\text{ in \S 4.2}& & & & & & &1 & &=8 \\
\hline
\delta(M_3)\text{ in \S 4.2}& & & &1 & & & & & =0  \\
\hline
\eqref{chid}& &  & &1 &1 &1 &-2 &  -1&=-1  \\
\hline
\eqref{vd}& &  & & & & & &1 &= 10 \\
\hline
\eqref{miss}& & &  & -1&1 & &  &-1 &=-1  \\

\hline
\eqref{sier}& & & 1 & & & & &  &=27 \\
\hline
\eqref{icss}-\eqref{3eq}&1 & 1&-1& &1 &\frac{1}{2} & &&=0\\
\hline
\end{array}
\eeq
The rank of the matrix of coefficients is 8, so we are able to compute all of the invariants. The following table shows their values.
\beq
\label{conc}
\begin{array}{| c | c | c | c | c | c | c |  c |}
\hline
\hskip -.1cm H_2^T(D^2)\hskip -.1cm&\hskip -.1cm H_2^\aalt(D^2)\hskip -.1cm&\hskip -.1cm H_2(D^2_1)\hskip -.1cm&\hskip -.1cm H_1^T(D^3)\hskip -.1cm&\hskip -.1cm H_1^\aalt(D^3)\hskip -.1cm&\hskip -.1cm H_1^\rho(D^3)\hskip -.1cm&\hskip -.1cm 
H_1(D^3_1)\hskip -.1cm&\hskip -.1cm VD_\infty\hskip -.1cm\\
\hline
\hline
1&9&27&0&9&16&8&10\\
\hline

\end{array}
\eeq
 \subsubsection{$\mathbf{f_0:(\CC^5,0)\to (\CC^6,0),\quad f_0(x,y,a,b,c)=(x^2+ax+by,xy,y^2+cx+ay,a,b,c)}$}
We are able to show
 \ben
 \y $H_3^\alt(D^3(f_t))\simeq\QQ$ and $H^\alt_4(D^2(f_t))=0$, so the vanishing homology of the image comes from the triple points.
 \y
$H_1(D^3(f_t))=0$,
$H_2(D^3(f_t))=H^\rho_2(D^3(f_t))\simeq\QQ^2$, and $H_3(D^3(f_t))=H_3^\alt(D^3(f_t))\simeq \QQ$. 
\y
 $H_1(D^2(f_t))=0$, $H_2(D^2(f_t))=H_2^T(D^2(f_t))\simeq \QQ$. 
  \y
 $\dim_{\QQ}H_4(D^2(f_t))=\dim_{\QQ}H_3(D^2(f_t))\leq 1.$ Both groups are $S_2$-invariant, by Houston's theorem 
 \cite[Theorem 4.6]{Hou97}
 \een
 (a) and (b) are shown in Subsection  \ref{D35to6}, and (c) and (d) are shown in Subsection \ref{defdub}.

This is the first example I know of a stable perturbation of a map-germ $f_0:(\CC^n,0)\to (\CC^{n+1},0)$ for which the vanishing homology of the multiple point spaces is not confined to middle dimension, though of course 
many such examples are to be expected when $f_0$ has corank $>1$. 

\section{Calculations for the germ $(\CC^3,0)\to (\CC^4,0)$}\label{3to4}
\subsection{Triple points}
Since no closed formula for a set of generators for the ideal defining $D^3(f)$ in $(\CC^3)^3$ is known,  we do not have direct access to any of the invariants of $D^3(f_t)$. However, we are able to build up a complete picture of the representation of $S_3$ on its
homology, and in particular calculate the dimension of $H_1^\alt(D^3(f_t))$, by working our way up from its image
under projection to $U_t$, $D^3_1(f_t)$. 
\begin{lemma}\label{list} $D^3_1(f_t)$ is a smoothing 
of $D^3_1(f)$
\end{lemma}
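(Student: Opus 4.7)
Proof proposal. I would prove the lemma in three parts.

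\emph{First}, $D^3_1(f_0)$ is a $1$-dimensional germ with isolated singularity at $0$: by the dimension count in Subsection \ref{mpi}, $\dim D^3_1(f_0)=n-2=1$, and $\mathcal A$-finiteness of $f_0$ makes $f_0$ stable away from $0$, so at each non-zero source triple point $D^3_1(f_0)$ is smooth (the source triple locus of a stable corank-$1$ multi-germ is smooth).

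\emph{Second}, $D^3_1(f_t)$ is smooth for small $t\ne 0$. By Remark \ref{inter}(2), since $n=3<6$, $f_t$ has only corank $1$ singularities, and for such a stable map all of its non-empty multiple point spaces are smooth. Since $f_0$ has multiplicity $3$, $D^4(f_t)=\emptyset$, so $\pi^3:D^3(f_t)\to D^2(f_t)$ is an embedding and $\pi^2\circ\pi^3:D^3(f_t)\to U_t$ is a finite map onto $D^3_1(f_t)$. This map factors through the involution $(2,3)\in S_3$ that fixes the first coordinate, realising $D^3_1(f_t)$ as the quotient of the smooth curve $D^3(f_t)$ by $\langle(2,3)\rangle$. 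The action is free on the open dense locus of strict triples, and stability in these dimensions forces every point of $D^3(f_t)$ to be a strict triple, so the quotient is smooth.

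\emph{Third}, the family is flat. Let $F(x,u)=(f_u(x),u):(\CC^3\times\CC^d,0)\to(\CC^4\times\CC^d,0)$ be a versal unfolding of $f_0$, and give $D^3_1$ the scheme structure $D^3_1(F):=F^{-1}(M_3(F))$ using the Fitting-ideal definition of $M_3$ recalled before Lemma \ref{conl}. As noted there, a presentation of $F_*\OO$ can be used to check that $M_3(F)$ is smooth of dimension $1+d$. The restriction of $F$ exhibits $D^3_1(F)$ as a finite (generically $3$-to-$1$) cover of the smooth germ $M_3(F)$, so $D^3_1(F)$ is Cohen--Macaulay of dimension $1+d$. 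Its fibres over $\CC^d$ all have dimension $1$, so miracle flatness (Cohen--Macaulay source, smooth base, equidimensional fibres) gives flatness of the family $D^3_1(F)\to\CC^d$. Combined with the previous steps, this is precisely what is meant by saying $D^3_1(f_t)$ is a smoothing of $D^3_1(f_0)$.

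The main obstacle is the Fitting-ideal identification in the last step: one must verify that the pullback scheme structure $F^{-1}(M_3(F))$ is the natural one on $D^3_1(F)$ and is Cohen--Macaulay. For this specific $f_0$ everything can be checked by a direct \emph{Macaulay2} computation of a presentation of $f_{0*}\OO$, where one simultaneously verifies the smoothness of $M_3(f_0)$.
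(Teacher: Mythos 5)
There are two genuine gaps. The first is in your smoothness argument. The claim that ``stability in these dimensions forces every point of $D^3(f_t)$ to be a strict triple'' is false, and the paper's own computations contradict it: the fixed points of the transposition $(2,3)$ on $D^3(f_t)$ are precisely the points $(a,b,b)$ for which $(a,b)$ is a Whitney umbrella point of $\pi^2:D^2(f_t)\to U_t$ (Lemma \ref{beg}); these occur at every stable bi-germ of type (b) in the classification (immersion crossing a Whitney umbrella), and the $VD_\infty$ computation shows there are ten of them, so the $\langle(2,3)\rangle$-action is not free and $D^3(f_t)\to D^3_1(f_t)$ is a genuinely branched double cover. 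For a curve this does not by itself ruin your conclusion, since the quotient of a smooth curve by a finite group is still smooth as an abstract complex space; but your argument still does not close, because $D^3_1(f_t)$ is the \emph{image} of $D^3(f_t)$ in $U_t$, and a finite map from a smooth curve that is bijective onto its image can perfectly well have a singular image (a cusp, for instance). One must check that the induced map $D^3(f_t)/\langle(2,3)\rangle\to U_t$ is an immersion at every point, including the ten branch points. The paper sidesteps all of this by checking smoothness of $D^3_1$ directly in each local normal form of a stable germ $\CC^3\to\CC^4$, the only possible obstruction being type (c4) (quadruple points), which does not occur here.

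The second gap is in the flatness step: the inference ``$D^3_1(F)$ is a finite cover of the smooth germ $M_3(F)$, so $D^3_1(F)$ is Cohen--Macaulay'' is a non-sequitur. A space admitting a finite surjection onto a smooth germ of the same dimension need not be Cohen--Macaulay (two planes in $\CC^4$ meeting in a point project finitely onto $\CC^2$); miracle flatness transfers information in the opposite direction, from a Cohen--Macaulay source to the map. The paper establishes Cohen--Macaulayness differently: by iteration, $D^3_1(F)=M_2\bigl(\pi^2:D^2(F)\to\CC^3\times S\bigr)$, and since $D^2(F)$ is Cohen--Macaulay and $\pi^2$ is finite and generically one-to-one, the Fitting-ideal theorem of \cite{MP89} gives that $M_2(\pi^2)$ is Cohen--Macaulay. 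Once that is in place, your appeal to miracle flatness (Cohen--Macaulay total space, smooth base, constant fibre dimension) coincides with the paper's concluding step, so the endgame of your argument is the same; it is the two inputs --- smoothness of the special and generic fibres, and Cohen--Macaulayness of the total space --- that need repair.
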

\begin{proof}
We have to show both that $D^3_1(f_t)$ is smooth, and that it is the fibre of a flat deformation of $D^3_1(f)$. The first statement is a consequence of the classification of stable map-germs. Up to $\mathcal A$-equivalence, the only stable germs of maps
$\CC^3\to\CC^4$ are 
\ben
\y
a trivial unfolding of the parameterisation of the Whitney umbrella:
$p_1(u,v,w)=(u,v,w^2,vw);$
\y
a bi-germ whose two branches are a germ of type (a) and an immersion, meeting in general position in $\CC^4$;
\y
a multi-germ consisting of $k$ immersions meeting in general position, for $k=1,2,3,4$ (we denote these by (c1),
\ldots,(c4)).
\een
Since $f_t$ is stable, every one if its germs is one of these types, and one can easily check that for each of them, except for (c4), the triple point locus $D^3_1$, where non-empty, is smooth. In the mapping $f_t$ there are no points of type (c4), so $D^3_1f_t)$ is smooth.

For the second statement, let $F:(\CC^3\times S,(0,0)\to (\CC^4\times S,(0,0)$ be a stable unfolding of $f$ over a smooth base $S$. Then $D^3(F)$ has dimension $1+\dim\,S$.  By the principle of iteration,  $D^3_1(F)=M_2(\pi^2:D^2(F)\to \CC^3\times S)$ (where $M_2$ means the set of double points in the target). Now $D^2(F)$ is Cohen-Macaulay, and $\pi^2$ is finite and generically 1-1, so $M_2(\pi^2)$ is also Cohen Macaulay (\cite{MP89}). Flatness of the projection $D^3_1(F)\to S$ now follows from the fact that the dimension of its fibre, $D^3_1(f_t)$, is equal to $\dim\, D^3_1(F)-\dim\, S$.  
\end{proof}
It follows from the lemma that $\text{rank}\ H_1(D^3_1(f_t))=\mu(D^3_1(f),0)$. We find $\mu$ by using Milnor's formula $\mu=2\delta-r+1$ (\cite{Milnor}), where 
$\delta$ is the $\delta$-invariant of a curve-germ and $r$ the number of its branches. 
We find $D^3_1(f)$ as the zero locus of the ideal
$f^*(\fitt_2)$, where $\fitt_2:=\fitt_2(f_*\OO_{\CC^3,0}))$ is the second Fitting ideal\footnote{For an $R$-module 
$M$, $\fitt_k(M)$ is the ideal generated by the minors of co-size $k$ of the matrix of a presentation of $M$.} of $\OO_{\CC^3,0}$ considered as $\OO_{\CC^4,0}$-module via $f^*$. 
{\it Macaulay2} gives the following presentation of $f_*(\OO_{\CC^3})$:
\beq\label{pres4}\begin{pmatrix}
-X^2U^2-2XUV+V^2-UW&X^4+U^2+X^3V&X^3U+2X^2V+XW\\
X^4+U^2+X^3V&-X^6-2X^2U-XV-W&-X^5-XU+V\\
X^3U+2X^2V+XW&-X^5-XU+V&-X^4-U
\end{pmatrix}
\eeq
from which we see that 
\beq\label{F2}\fitt_2=(X^4+U,V,X^2U+W)\eeq and   
\beq\label{ff2}f^*\fitt_2=(x^4+x^2y+y^2+xz,yz,x^3z+z^2).\eeq

Primary decomposition of the ideal \eqref{ff2} shows that the curve $D^3_1(f)$ has three smooth components:
$$C_1=V(y,x^3+z),\quad 
C_2=V(z, y-\xi x^2)
\quad 
C_3=V(z,y-\xi^2x^2)$$
where $\xi=e^{2i\pi/3}$, with parameterisations
$$\g_1(t)=(t,0,-t^3),\quad \g_2(u)=(u,\xi u^2,0),\quad \g_3(v)=(v,\xi^2v^2,0).$$ 
Denoting by $\OO_\sig:=\CC\{t\}\oplus
\CC\{u\}\oplus\CC\{v\}$ the ring of the normalisation of $\Sigma$, we find that
$$n^*(\OO_{\Sigma,0})=(t^3)\oplus (u^3)\oplus (v^3)
+\text{Sp}\{(1,1,1),(t,u,v),(t^2,u^2,v^2), (0,\xi u^2, \xi^2v^2)\}\subset
\OO_\sig.$$
Hence 
$$\delta(D^3_1(f))=\dim_{\CC}\frac{\OO_\sig,0}{n^*\OO_{\Sigma,0}}=5,$$
and $$\text{rank }H_1(D^3_1(f_t))=\mu(D^3_1(f))=2\delta-3+1=8.$$

The projection $D^3(f_t)\to D^3_1(f_t)$ is a double cover, with points
$(a,b,c)$ and $(a,c,b)$ sharing the same image, and no points of higher multiplicity, as $f$ has no quadruple points.
The cover is simply branched at triple points of the form $(a,b,b)$; there are no triple points of the form $(a,a,a)$, since if there were, then $f_t$ would have multiplicity $\geq 3$ at $a$, and as we see in the list of stable germs in the proof of Lemma \ref{list}, none has multiplicity $>2$.  Thus 
\beq\label{chid}\chi(D^3(f_t))=2\chi(D^3_1(f_t))-\#\text{branch points}=-14-\#\text{branch points}.\eeq
To complete the calculation of the Euler characteristic of $D^3(f_t)$, we have to compute the number of branch points. This seems not to be straightforward. Though the branch points are points of intersection of 
$D^3_1(f_t)$ and the non-immersive locus $R(f_t)$, both of these are curves so their intersection in $U_t$ is
not a proper intersection. Both curves lie in the surface $D^2_1(f_t)$, where the intersection is proper, but $D^3_1(f_t)$ is the singular locus of $D^2_1(f_t)$ and so again calculation of the intersection number is difficult.
Instead we use the fact that the branch points are Whitney umbrella points of $D^2_1(f_t)$, which we explain in the next section, and count them using a theorem of Theo de Jong in \cite{dJ90}.

\subsection{Double points}\label{a1d2}
\begin{lemma}\label{beg} $(a,b,b)\in D^3(f_t)$ if and only if $(a,b)\in D^2(f_t)$ is 
a Whitney umbrella point of the projection $\pi^2:D^2(f_t)\to U_t$.
\end{lemma}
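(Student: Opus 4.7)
The plan is to reduce the statement to a local property of $\pi^2$ using the principle of iteration. By Remark \ref{inter}(2), the correspondence \eqref{poi} identifies $D^2(\pi^2)$ with $D^3(f_t)$, and under this identification $(a,b,b)\in D^3(f_t)$ corresponds precisely to the pair $\bigl((a,b),(a,b)\bigr)\in D^2(\pi^2)$. In other words, $(a,b,b)\in D^3(f_t)$ if and only if $(a,b)$ is a non-strict double point of $\pi^2$, meaning that the diagonal pair $\bigl((a,b),(a,b)\bigr)$ lies in the closure of the strict pairs of $D^2(\pi^2)$.

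Next I would appeal to the stability of $\pi^2:D^2(f_t)\to U_t$, which holds because $n=3<6$ (Remark \ref{inter}(2)). The source $D^2(f_t)$ is a smooth surface and $U_t$ is a smooth $3$-fold, so the mono-germ of $\pi^2$ at any point is either an immersion or a Whitney umbrella -- these are the only stable mono-germs in this dimension range. An immersion has no non-strict double points at its base point, so a non-strict double point of $\pi^2$ can occur only at a Whitney umbrella. Conversely, working in the normal form $(u,v)\mapsto (u,v^2,uv)$, the equation for double points forces $u_1=u_2=0$ and $v_2=-v_1$, so the strict double pairs $\bigl((0,v),(0,-v)\bigr)$, $v\neq 0$, accumulate at the origin, producing a non-strict double point there. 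This gives both directions of the equivalence.

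The main obstacle is essentially bookkeeping: one has to unwind \eqref{poi} correctly to confirm that $(a,b,b)$ matches the diagonal pair of $D^2(\pi^2)$ and not some other configuration, and one must verify that the Whitney umbrella contributes exactly one non-strict double point while an immersion contributes none. The structural input that makes everything go is Remark \ref{inter}(2): it both guarantees the stability of $\pi^2$ and forces each germ of $\pi^2$ onto the short Whitney list, reducing the question to a single normal-form computation.
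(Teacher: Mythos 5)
Your proof is correct and takes essentially the same route as the paper: both identify $D^3(f_t)$ with $D^2(\pi^2)$ via the iteration correspondence \eqref{poi}, recognise $(a,b,b)$ as the diagonal (equivalently, fixed) point of the involution, hence a non-immersive point of $\pi^2$, and invoke the stability of $\pi^2$ from Remark \ref{inter} to conclude that this forces a Whitney umbrella. The only difference is that you make the converse explicit with the normal-form computation $(u,v)\mapsto (u,v^2,uv)$, which the paper leaves implicit.
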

\begin{proof} The map $$(a,b,c)\mapsto ((a,b),(a,c))$$ identifies $D^3(f_t)$ with $D^2(\pi^2:D^2(f_t)\to U_t)$. A point of the form $(a,b,b)$ becomes a fixed point of the involution $((a,b),(a,c))\mapsto ((a,c), (a,b))$, and thus
a non-immersive point of $\pi^2$. By Remark \ref{inter}, this must be a Whitney umbrella point. \end{proof}
From Lemma \ref{beg} we see that to find the dimension of $H_1^\alt(D^3(f_t))$ we must count the number of
Whitney umbrellas on $D^2_1(f_t)$.  Let $W(D^2_1(f_t))$ denote the set of all such points. They appear when $f_t$ has a bi-germ of type (b) in the list in the proof of Lemma \ref{list}: the Whitney umbrella appears on $D^2_1(f_t)$ at the  source point of the immersive member of the bi-germ.  If $R(f_t)$ is the set of non-immersive points of $f_t$, then $W(D^2_1(f_t))=D^3_1(f_t)\cap R(f_t),$ 
so one might hope to calculate the number of points in $W(D^2_1(f_t))$ as an intersection number. But as remarked above, the intersection is improper: both $D^3_1(f_t)$ and $R(f_t)$ are curves. We are forced to look further afield, and use a theorem of Theo de Jong
(\cite{dJ90}). The {\it virtual number of $D_\infty$ points} on a germ
of singular surface $(S,x_0)\subset\CC^3$, with 1-dimensional singular locus $
\Sigma$, and with reduced equation $h$, is defined as 
follows. Let $\theta(h)$ be the restriction to $\Sigma$ of the germs of 
vector fields on $(\CC^3,x_0)$ tangent to all level sets of $h$. Then 
$\theta(h)\subset\theta_\Sigma$.  
Let $\sig$
be the normalisation of $\Sigma$. Since vector fields lift uniquely
to the normalisation we can consider the quotient $\theta_{\tilde\Sigma,\tilde x_0}/\theta(h)$. De Jong defines
\beq\label{def}
\text{VD}_\infty(S)=\dim_\CC\left(\frac{\theta_{\tilde\Sigma_{\tilde x_0}}}
{\theta(h)}\right)-3\delta(\Sigma)
\eeq
and shows (\cite[Theorem 2.5]{dJ90}) that $VD_\infty(S)$ is conserved in a flat deformation of $S$ which induces a flat deformation of $\Sigma$. 

Let us apply this to the case where $S$ is the surface $D^2_1(f)$ for a finitely
determined map-germ $f:(\CC^3,0)\to (\CC^4,0)$. In this case $\Sigma=D^3_1(f)$.
A deformation of $f$ over a smooth base $S$ induces a flat deformation of the $D^2_1(f)$, since it is  a hypersurface. We have already seen that $D^3_1(f)$ deforms flat over $S$. Thus we may apply de Jong's theorem. The special points on $D^3_1(f_t)$,
where $D^2_1(f_t)$ is not a normal crossing of two sheets, are of two types:
Whitney umbrella points and triple points. We have already seen how
Whitney umbrella points arise; triple points correspond to quadruple 
points of $f_t$, in which four pieces of $\CC^3$ are mapped immersively and
in general position. We denote the number of these by $Q$. 
\begin{corollary} $\text{V}D_\infty(D^2_1(f))= |\text{Fix}(1,2)|-8Q$
\end{corollary}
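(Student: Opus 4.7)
The plan is to apply de Jong's conservation theorem \cite{dJ90} for $\vd$ to the stable deformation $F$ of $f_0$. Its hypothesis asks for a flat deformation of the surface $S=D^2_1(f_0)$ inducing a flat deformation of its singular locus $\Sigma=D^3_1(f_0)$. The first is automatic, since $D^2_1(F)\subset\CC^3\times S$ is cut out by a single equation and so is flat over the smooth base $S$; the second is the content of Lemma \ref{list}. Consequently $\vd(D^2_1(f_0),0)$ equals the sum of local contributions of $\vd$ at the finitely many non-normal-crossing singular points of $D^2_1(f_t)$ for small $t\neq 0$.

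Next I would classify those singular points. At a generic point of the 1-dimensional singular locus $\Sigma_t$, $D^2_1(f_t)$ is a transverse normal crossing of two smooth sheets, whose local contribution to $\vd$ vanishes (a one-line check from \eqref{def} applied to $h=xy$). As remarked just before the statement, the remaining singularities come in exactly two types: Whitney umbrella points, which by Lemma \ref{beg} are in bijection with the fixed points of the involution $(1,2)$ on $D^2(f_t)$ and therefore number $|\text{Fix}(1,2)|$; and transverse triple points, where three smooth sheets of $D^2_1(f_t)$ meet pairwise transversally at one point of $U_t$. Such a transverse triple point appears at each of the four preimages of a quadruple point of $f_t$, so there are $4Q$ of them.

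The remaining step, and the place where the real work lies, is the local computation of $\vd$ at each model. A Whitney umbrella $h=x^2-y^2z$ contributes $1$: this is the normalising case for the invariant and is a short direct calculation. The main obstacle I expect is the transverse triple point, modelled by $h=xyz$ with $\Sigma$ the three coordinate axes and normalisation $\tilde\Sigma$ three disjoint copies of $\CC$. Here $\theta(h)$ is generated over $\OO_{\CC^3,0}$ by $x\partial_x-y\partial_y$ and $y\partial_y-z\partial_z$; lifting these and their $\OO_{\CC^3,0}$-multiples to $\tilde\Sigma$ and comparing with $\theta_{\tilde\Sigma}=\bigoplus_{i=x,y,z}\CC\{t^i\}\partial_{t^i}$, one finds that the three classes $\partial_{t^i}$ together with the single diagonal class of $t^x\partial_{t^x}\equiv t^y\partial_{t^y}\equiv t^z\partial_{t^z}$ form a basis of the quotient, giving $\dim_\CC\theta_{\tilde\Sigma}/\theta(h)=4$. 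Since $\delta(\Sigma)=2$ (a direct count in $\OO_{\tilde\Sigma}/\OO_\Sigma$), formula \eqref{def} yields a local contribution of $4-3\cdot2=-2$. Summing the two types of contribution gives
$$
\vd(D^2_1(f))=1\cdot|\text{Fix}(1,2)|+(-2)\cdot 4Q=|\text{Fix}(1,2)|-8Q,
$$
as required.
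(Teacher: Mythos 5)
Your proof follows essentially the same route as the paper's: both invoke de Jong's conservation theorem for $\vd$ under the flat deformations already established, and both reduce the count to local contributions of $+1$ at each Whitney umbrella and $-2$ at each of the $4Q$ transverse triple points. The one substantive difference is that you carry out the local computation of the $-2$ for $h=xyz$ (and the vanishing at normal crossings) by hand, whereas the paper simply cites \cite[Example 2.3.3]{dJ90}; your calculation is correct and matches the cited value. One small inaccuracy to fix: the Whitney umbrella points of $D^2_1(f_t)$ correspond, via Lemma \ref{beg} and the identification $D^3(f_t)\simeq D^2(\pi^2)$, to the fixed points of the involution $(1,2)$ on $D^2(\pi^2)$ — equivalently of $(2,3)$ on $D^3(f_t)$ — not to the fixed points of $(1,2)$ on $D^2(f_t)$, which form the whole non-immersive curve $R(f_t)$ rather than a finite set; the number you are counting is nonetheless the right one.
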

\begin{proof} Each Whitney umbrella point contributes $1$ to $\vd(D^2_1)$. Each quadruple point gives rise to four triple points on 
$D^2_1(f)$. Each triple point contributes $-2$ to $\vd(D^2_1))$ (\cite[Example 2.3.3]{dJ90}). So
$$\text{V}D_\infty(D^2_1(f))=\#\text{Whitney umbrellas} -2\#\text{triple points}=|\text{Fix}(1,2)|-8Q.$$
\end{proof}
Now we return to the map germ $f$ of Sharland that is the focus of our interest. 

To compute $VD_\infty(D^2_1)$ we need to find lifts to the normalisation $\sig$ of $D^3_1(f)$ of the vector fields annihilating the equation $h$ of $D^2_1(f)$. A {\it Macaulay} calculation finds that modulo the defining ideal of
$D^3_1(f)$, these vector fields are generated by
$$\chi_1=(x^3y^2+2xy^3-3xz^2)\frac{\p}{\p x}+ 
        (2x^2y^3+4y^4)\frac{\p}{\p y}  
     -9z^3 \frac{\p}{\p z}  $$
                
 $$ \chi_2= (y^4+x^2z^2)\frac{\p}{\p x}
         -(2x^3y^3-2xy^4)\frac{\p}{\p y}  +
       3xz^3  \frac{\p}{\p z}     $$
These lift to $$\tilde\chi_1=\left(-3t^7\frac{\p}{\p t}, (2+\xi^2)u^7\frac{\p}{\p u}, (2+\xi)v^7\frac{\p}{\p v}\right), \quad
\tilde\chi_2=\left(t^8\frac{\p}{\p t}, \xi u^8\frac{\p}{\p u}, \xi^2v^8\frac{\p}{\p v}\right) $$ 
in $\theta_\sig=\CC\{t\}\p_t\oplus\CC\{u\}\p_u\oplus
\CC\{v\}\p_v$.  
The $\OO_{\CC^3}$-submodule of $\theta_\sig$ they generate is equal to 
$$(t^{10})\p_t\oplus(u^{10})\p_u\oplus(v^{10})\p_v+\text{Sp}_{\CC}\{\tilde\chi_1,\ x\tilde\chi_1,\ x^2\tilde\chi_1,\ \tilde\chi_2,\ x\tilde\chi_2\}.
$$
Hence $\dim_{\CC}(\theta_\sig/\theta(h))=25$ so that 
\beq\label{vd}
VD_\infty(D^2_1)=25-3\times 5=10.
\eeq

We have proved 
\begin{lemma} The involution $(2,3)$ has $10$ fixed points on $D^3(f_t)$.\eop
\end{lemma}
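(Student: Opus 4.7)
The plan is as follows. Since the involution $(2,3)\in S_3$ sends $(x_1,x_2,x_3)\mapsto(x_1,x_3,x_2)$, its fixed points on $D^3(f_t)$ are precisely the points of the form $(a,b,b)$. By Lemma \ref{beg}, these are in bijection with the Whitney umbrella points of $\pi^2:D^2(f_t)\to U_t$, equivalently with the Whitney umbrella singularities of the surface $D^2_1(f_t)$. The three transpositions in $S_3$ are conjugate by cyclic permutations that act $S_3$-equivariantly on $D^3(f_t)$, so $|\mathrm{Fix}(2,3)|=|\mathrm{Fix}(1,2)|$, and the Corollary just proved gives
\[
|\mathrm{Fix}(2,3)| \;=\; VD_\infty\bigl(D^2_1(f_t)\bigr)+8Q,
\]
where $Q$ is the number of quadruple points of $f_t$. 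Because the multiplicity of $f_0$ is $3$, its stable perturbation $f_t$ has no quadruple points, so $Q=0$.

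Next I would invoke the conservation theorem of de Jong, \cite[Theorem 2.5]{dJ90}: $VD_\infty$ is preserved under any flat deformation of $D^2_1$ that also flatly deforms its singular curve $D^3_1$. Both flatness statements, over the base $S$ of a stable unfolding $F$ of $f_0$, are already in hand: the first because $D^2_1(F)\subset\CC^4\times S$ is a hypersurface in a smooth space, the second from the proof of Lemma \ref{list}. Therefore $VD_\infty(D^2_1(f_t))=VD_\infty(D^2_1(f_0))$, and the problem reduces to computing this single invariant of the central germ.

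To compute $VD_\infty(D^2_1(f_0))$, I would follow the definition \eqref{def} directly. First, starting from a reduced equation $h$ for $D^2_1(f_0)$ (obtained from the formula for $I_2(f_0)$ preceding Lemma \ref{gend2}), find explicit generators of the $\OO_{\CC^3,0}$-module $\theta(h)$ of vector fields tangent to all level sets of $h$, taken modulo the ideal defining $\Sigma=D^3_1(f_0)$. Then lift each such generator to the normalisation $\tilde\Sigma$ using the three parametrisations $\gamma_1,\gamma_2,\gamma_3$ already displayed, giving elements of $\theta_{\tilde\Sigma}=\CC\{t\}\p_t\oplus\CC\{u\}\p_u\oplus\CC\{v\}\p_v$. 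Finally, compute the $\CC$-codimension of the $\OO_{\CC^3}$-submodule they generate, and subtract $3\delta(\Sigma)=15$, where $\delta(\Sigma)=5$ was computed in the preceding subsection. Equation \eqref{vd} records the answer $25-15=10$.

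The main obstacle is the very first step of this last stage: producing the two vector fields $\chi_1,\chi_2$ generating $\theta(h)$ modulo $I(\Sigma)$. This requires a Macaulay2 calculation and is unavoidable, since both the rank of the submodule of $\theta_{\tilde\Sigma}$ and the specific leading orders of the lifts on each of the three branches of $\tilde\Sigma$ feed directly into the codimension count. Once $\chi_1$ and $\chi_2$ are in hand, the lifting and the subsequent dimension count are routine linear algebra over $\CC$, and combine with the previous two paragraphs to yield $|\mathrm{Fix}(2,3)|=10$.
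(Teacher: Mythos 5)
Your proposal is correct and follows essentially the same route as the paper: identify the fixed points of $(2,3)$ with the Whitney umbrella points of $D^2_1(f_t)$ via Lemma \ref{beg}, relate their number to $VD_\infty(D^2_1(f_t))$ using the Corollary with $Q=0$, transfer to the central fibre by de Jong's conservation theorem (justified by the flatness of $D^2_1$ as a hypersurface and of $D^3_1$ from Lemma \ref{list}), and compute $VD_\infty(D^2_1(f_0))=25-3\cdot 5=10$ from the lifted vector fields $\chi_1,\chi_2$ on the normalisation. This is exactly the paper's argument, including the unavoidable Macaulay2 step producing $\chi_1$ and $\chi_2$.
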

\begin{corollary}\label{dim3}$\dim_{\CC}H_1(D^3(f_t);\CC)=25.$
\end{corollary}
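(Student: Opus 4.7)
The plan is to compute $\dim H_1(D^3(f_t);\CC)$ by computing its Euler characteristic and then arguing that $D^3(f_t)$ is a connected open Riemann surface, so that all homology sits in degrees $0$ and $1$.

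First I would record the fact that, by Remark \ref{inter}(2), since $n=3<6$ the non-empty space $D^3(f_t)$ is a smooth complex curve, hence a (non-compact) Riemann surface; consequently $H_i(D^3(f_t);\CC)=0$ for $i\geq 2$ and $\dim H_1 = b_0 - \chi(D^3(f_t))$.

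Next I would compute $\chi(D^3_1(f_t))$. By Lemma \ref{list} it is a smoothing of the curve singularity $D^3_1(f_0)$, which we found to be reduced with three smooth branches and $\delta=5$, so by Milnor's formula $\mu(D^3_1(f_0))=2\delta-r+1=8$. The smoothing of an isolated reduced curve singularity is connected and homotopy equivalent to a wedge of $\mu$ circles, so $\chi(D^3_1(f_t))=1-8=-7$. Plugging into \eqref{chid} together with the $10$ branch points of $\pi^3$ (these are the $10$ fixed points of the involution $(2,3)$, i.e.\ the Whitney umbrellas on $D^2_1(f_t)$ counted via $VD_\infty$ in \eqref{vd}) gives
\[
\chi(D^3(f_t))=2(-7)-10=-24.
\]

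The key remaining point is connectedness of $D^3(f_t)$. Suppose for contradiction that $D^3(f_t)=A\sqcup B$ with $A,B$ non-empty. The involution $\tau=(2,3)$ either swaps the two components or preserves each setwise. Swapping is impossible because $\tau$ has $10$ fixed points. If it preserves each, then $D^3_1(f_t)=D^3(f_t)/\tau = (A/\tau)\sqcup(B/\tau)$ would be disconnected, contradicting the connectedness of the smoothing $D^3_1(f_t)$ established above. Hence $b_0(D^3(f_t))=1$ and
\[
\dim_{\CC}H_1(D^3(f_t);\CC)=1-\chi(D^3(f_t))=1-(-24)=25.
\]

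The main obstacle here is really just bookkeeping: one must be confident that $D^3_1(f_t)$ is a \emph{connected} smoothing with $\chi=1-\mu$ (which follows because it is an isolated, reduced, smoothable curve singularity, and Lemma \ref{list} together with its proof gives a bona fide flat smoothing), and one must justify that higher homology of $D^3(f_t)$ vanishes, which is free from Remark \ref{inter}(2). Everything else is an Euler-characteristic count already assembled in \eqref{chid} and \eqref{vd}.
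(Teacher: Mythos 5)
Your proposal is correct and follows essentially the same route as the paper: the paper's own proof is just the one-line observation that the lemma (the $10$ fixed points of $(2,3)$, obtained from $VD_\infty=10$) together with \eqref{chid} gives $\chi(D^3(f_t))=-24$, from which $\dim H_1=25$ follows since $D^3(f_t)$ is a connected smooth curve. Your additional care in justifying connectedness and the vanishing of higher homology merely makes explicit what the paper leaves implicit.
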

\begin{proof}
By the lemma and \eqref{chid}, $\chi(D^3(f_t))=-24.$
\end{proof}
\begin{proposition}\label{dimde} $\dim_{\CC}H_1^\alt(D^3(f_t))=9$, $\dim_{\CC}H_1^\rho(D^3(f_t))=16,$ and
$\dim_{\CC}H_2^\alt(D^2(f_t))=9.$
\end{proposition}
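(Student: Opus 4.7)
The plan is to use the full $S_3$-isotypic decomposition of $H_1(D^3(f_t))$ and to combine three independent pieces of information: its total dimension, the vanishing of its trivial-isotypic part, and the dimension of its $\langle(2,3)\rangle$-invariants. First, Corollary \ref{dim3} gives $\dim H_1(D^3(f_t))=25$, and Lemma \ref{conl} applied with $k=3$ (valid since $M_3(f_t)$ is smooth, as noted right after that lemma) gives $H_1^T(D^3(f_t))=0$. So writing $b=\dim H_1^\alt(D^3(f_t))$ and $2c=\dim H_1^\rho(D^3(f_t))$ (where $c$ is the multiplicity of the two-dimensional irreducible $\rho$), we have $b+2c=25$.

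The key step is to exploit the description of the projection $D^3(f_t)\to D^3_1(f_t)$ given in Subsection 4.2 as a double cover whose covering involution is the transposition $\sigma=(2,3)\in S_3$, with branch locus precisely the Whitney umbrella points $(a,b,b)$ of Lemma \ref{beg}. Since $f_t$ has no quadruple points and no point in the source of multiplicity $>2$, each strict triple-point image $a\in D^3_1(f_t)$ has exactly two preimages $(a,b,c), (a,c,b)\in D^3(f_t)$, interchanged by $\sigma$, while fixed points occur only at $(a,b,b)$. By the standard transfer argument for a finite group acting on a reasonable space, $H_*(D^3_1(f_t);\QQ)\cong H_*(D^3(f_t);\QQ)^{\langle\sigma\rangle}$. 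Now the character of $\rho$ on $S_3$ restricted to $\langle\sigma\rangle$ has values $(2,0)$, so $\rho|_{\langle\sigma\rangle}\cong T\oplus\alt$, whereas $T$ and $\alt$ restrict to $T$ and $\alt$ respectively. Hence each copy of $\rho$ contributes exactly $1$ to the $\langle\sigma\rangle$-invariants, while $\alt$ contributes $0$, giving
\[
\dim H_1(D^3_1(f_t)) = \dim H_1^T(D^3(f_t)) + c = c.
\]
From the Milnor-formula computation earlier in Subsection 4.1 we have $\dim H_1(D^3_1(f_t))=2\delta-r+1=8$, so $c=8$ and $\dim H_1^\rho(D^3(f_t))=16$. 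Substituting into $b+2c=25$ then gives $\dim H_1^\alt(D^3(f_t))=9$. Finally, \eqref{hgm} together with $\mu_I(f_0)=18$ (computed in Subsection \ref{mui}) yields
\[
\dim H_2^\alt(D^2(f_t)) = \mu_I(f_0)-\dim H_1^\alt(D^3(f_t)) = 18-9 = 9.
\]

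The main technical point, and the only place where something other than linear algebra and character theory is needed, is the identification $D^3_1(f_t)\cong D^3(f_t)/\langle(2,3)\rangle$ and the consequent equality of rational homology with the $\langle(2,3)\rangle$-invariants. Both rely on the absence of quadruple points for $f_t$ (so the projection is at most $2$-to-$1$) and the correct identification of the branch locus with Whitney umbrellas via Lemma \ref{beg}; both were already essentially established in the preceding subsections, so the argument above is really just an organised accounting of those facts.
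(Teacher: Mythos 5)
Your proposal is correct, and all three numbers come out as in the paper, but the middle step is organised differently. The paper's own proof applies the Lefschetz fixed point theorem to the involution $(2,3)$ acting on $D^3(f_t)$: the $10$ fixed points (the $VD_\infty$ count), together with $H_1^T=0$, give $10=1+\dim H_1^{\text{\scriptsize Alt}}(D^3(f_t))$ directly, and $\dim H_1^\rho=16$ is then deduced from the total dimension $25$. You instead identify $D^3_1(f_t)$ with $D^3(f_t)/\langle(2,3)\rangle$ (legitimate, since the absence of quadruple points makes the fibres of $\pi$ exactly the $(2,3)$-orbits; the paper states this identification explicitly only for the $(\CC^5,0)\to(\CC^6,0)$ germ, but it holds here for the same reason), apply the transfer isomorphism $H_*(D^3_1;\QQ)\cong H_*(D^3;\QQ)^{\langle(2,3)\rangle}$, and read off $\dim H_1^\rho=2\cdot 8=16$ from the Milnor-formula value $\dim H_1(D^3_1(f_t))=8$; the alternating dimension then comes out of $25=b+2c$. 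The two routes are essentially linear-algebra rearrangements of one another (the Lefschetz number of an involution equals $2\chi(X/\sigma)-\chi(X)$, and the quotient carries the invariant homology), and both ultimately depend on the $VD_\infty=10$ computation through Corollary \ref{dim3}. What your version buys is that it bypasses the verification, needed for the naive fixed-point count, that each fixed point of $(2,3)$ is non-degenerate with local Lefschetz index $1$; what it costs is the extra input $\dim H_1(D^3_1(f_t))=8$ and the explicit quotient identification, both of which are however already available earlier in Section 4. The final step, $\dim H_2^{\text{\scriptsize Alt}}(D^2(f_t))=18-9=9$ via \eqref{hgm}, is identical in both arguments.
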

\begin{proof}
We use the Lefschetz fixed point theorem:
$$10=\#\text{fixed points of }(2,3) =\sum_{k\geq 0}(-1)^k\left(\text{trace}(2,3)_*:H_k(D^3(f_t))\to H_k(D^3(f_t))\right)$$
\beq\label{miss}=1-\text{trace}(2,3)_*:H_1(D^3(f_t)\to H_1(D^3(f_t))=1+\dim_{\CC}H_1^\alt(D^3(f_t))-\dim_{\CC}H_1^T(D^3(f_t)).\eeq
The last equation here follows from the fact that the trace of $(2,3)$ on the irreducible sign representation of $S_3$, on the trivial representation and on the irreducible 2-dimensional representation is $-1$, $1$ and $0$ respectively.
It is straightforward to check that each fixed point of $(2,3)$ is non-degenerate and therefore has Leftschetz number 1.  Since $H_1^T(D^3(f_t))=0$, we obtain the first equality in the statement of the corollary. The second equality now follows by Corollary \ref{dimde} and the third by the fact that $18=\mu_I(f)=\dim_{\CC}H_2^\alt(D^2(f_t))+\dim_{\CC}H_1^\alt(D^3(f_t))$.\end{proof}

Now we compute $H_2(D^2(f_t))$.
Although we have a formula for the ideal defining $D^2(f)$, we have no method of deriving from it a formula for
the rank of the homology of $D^2(f_t)$. So once again we proceed indirectly, by calculating the homology of the image of its projection to $U_t$, 
$D^2_1(f_t)$. 
\begin{lemma}\label{nocm}
$D^2_1(f_t)$ has the homotopy type of a wedge of 27 2-spheres. 
\end{lemma}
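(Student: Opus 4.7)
The map $\pi^2 \colon D^2(f_t) \to U_t$ is stable by Remark \ref{inter}(2), and is the stable perturbation of the $\mathcal{A}$-finite germ $\pi^2_0 \colon (D^2(f_0),0) \to (\CC^3,0)$. By Siersma's wedge theorem \cite{siersma}---in the form valid for stable perturbations whose source is a smoothing of a Cohen-Macaulay isolated surface singularity (in the spirit of \cite{Hou97})---$D^2_1(f_t)$ has the homotopy type of a wedge of 2-spheres, the number being the image Milnor number $\mu_I(\pi^2_0)$. So the problem reduces to counting, with a verification that the ``wedge'' theorem applies in this non-standard setting.

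To set up the count via the ICSS for $\pi^2$, the $E^1$-page is supported in columns $p=0,1$, with $E^1_{0,q} = H_q(D^2(f_t))$ and $E^1_{1,q} = H_q^\alt(D^2(\pi^2))$, vanishing for $p\geq 2$ since $D^3(\pi^2) \simeq D^4(f_t) = \emptyset$. The iteration principle \eqref{poi} identifies $D^2(\pi^2) \cong D^3(f_t)$ with the $S_2$-action realised by the transposition $(2,3) \in S_3$. By the character table of $S_3$ and Proposition \ref{dimde}, $H_1^\alt(D^2(\pi^2))$ is the $(-1)$-eigenspace of $(2,3)$ on $H_1(D^3(f_t))$, of dimension $\dim H_1^\alt(D^3(f_t)) + \tfrac12 \dim H_1^\rho(D^3(f_t)) = 9 + 8 = 17$. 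All $d^1$-differentials vanish: the delicate one, $d^1 \colon E^1_{1,1} \to E^1_{0,1}$, because $H_1(D^2(f_t)) = 0$ by Greuel--Steenbrink applied to the normal surface singularity $D^2(f_0)$ (Lemma \ref{gend2}), and the rest because source or target has the wrong dimension. So $H_1(D^2_1(f_t)) = 0$ and $H_2(D^2_1(f_t)) = H_2(D^2(f_t)) + 17$.

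It remains to establish $\mu_I(\pi^2_0) = 27$, equivalently $H_2(D^2(f_t)) = 10$. The plan is to apply the algebraic formula \eqref{keq} directly: extract the reduced equation $H(x,y,z,u)$ of $D^2_1$ for the versal deformation $f_u$ from a presentation of $(f_u)_*\OO_{\CC^3}$ (paralleling \eqref{pres4}) by an elimination or Fitting-ideal computation in \emph{Macaulay2}, verify Cohen-Macaulayness of the residual scheme $\Sigma$, and evaluate the colength of $(J^{\text{rel}}_H : H^\infty) + (u_1,\ldots,u_d)$ to obtain $27$. The main obstacle is this final computation: producing $H$ for the full versal unfolding is memory-intensive, and if Cohen-Macaulayness of $\Sigma$ fails one must fall back on Serre's \emph{formule clef} to compute the intersection multiplicity. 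As a cross-check, one could instead attempt to compute $H_2(D^2(f_t))$ directly as the Milnor number of the non-complete-intersection normal surface $D^2(f_0)$ from the explicit ideal $I_2(f_0)$, which would independently verify the ICSS identity and hence the stated count.
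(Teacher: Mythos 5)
Your overall strategy is the one the paper itself uses: realise $D^2_1(f_t)$ as the fibre over $t$ of the hypersurface $D^2_1(F)$ for a stable unfolding $F$, invoke Siersma's theorem to conclude that it is a wedge of $2$-spheres whose number is the intersection multiplicity of $V(J^{\text{rel}}_G:G^\infty)$ with $\{0\}\times\CC^3$, and then compute that number. (Incidentally, you do not need a generalised ``wedge theorem for stable perturbations with Cohen--Macaulay source'': since $D^2_1$ is a hypersurface in $U_t$, Siersma's theorem on special fibres applies directly to the family of defining equations $g_t$.) The genuine gap is that the proposal stops exactly where the proof has to be done: the number $27$ is never produced, only a plan for producing it. And the outcome of that computation is not the routine one you are hoping for: in this example $V(J^{\text{rel}}_G:G^\infty)$ is \emph{not} Cohen--Macaulay (it has projective dimension $5$ as $\OO_{\CC^6,0}$-module while its codimension is $3$), so formula \eqref{keq} does not apply and one is forced onto the fallback you mention only as a contingency, Serre's \emph{formule clef}: the three Tor modules have dimensions $29$, $3$, $1$, whence $27=29-3+1$. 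The paper remarks that this failure of Cohen--Macaulayness is striking and is the only such instance known to the author, so it is the real content of the proof rather than an unlikely edge case.

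Two further points. The ICSS detour through $H_1^\alt(D^2(\pi^2))\simeq\QQ^{17}$ is not needed for this lemma and runs the logic backwards: in the paper, the identity $\dim H_2(D^2_1(f_t))=\dim H_2(D^2(f_t))+17$ is used \emph{after} Lemma \ref{nocm}, to deduce $\dim H_2(D^2(f_t))=10$ from the $27$, not to reduce the lemma to knowing $H_2(D^2(f_t))$. Relatedly, your proposed cross-check --- computing $H_2(D^2(f_t))$ directly from the ideal $I_2(f_0)$ --- is precisely what the paper says it cannot do: there is no available method for extracting the rank of the vanishing homology of $D^2(f_t)$ from its defining ideal, which is why the argument must route through the image $D^2_1$ in the first place.
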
 
\begin{proof}
We use the technique explained in Subsection \ref{mui}, based on Siersma's theorem (\cite{siersma}) that the rank of the vanishing homology of $D^2_1(f)$ is equal to the number of 
critical points of a reduced defining equation of $D^2_1(f)$ which move off the zero level as $t$ moves off $0$. 
The unfolding 
$$F(t_1,t_2,t_3,x,y,z)=(t_1,t_2,t_3,x,y^2+xz+x^2y,yz+t_1y+t_2z,z^2+y^3+t_3y)$$ is stable, by Mather's algorithm for the construction of stable germs as unfoldings of germs of rank 0, and $D^2_1(f_t)$ is the fibre of $D^2_1(F)$ over $t\in\CC^3$.  Let $G$ be an equation 
for $D^2_1(F)$,  let $g_t$ be its restriction to $\{t\}\times\CC^3$, and let $J^{\text{rel}}_G$ be the relative jacobian ideal 
$\left(\p G/\p X, \p G/\p Y, \p G/\p Z\right)$.  As in Subsection \ref{mui}, we compute the number of 
critical points of a reduced defining equation of $D^2_1(f)$ which move off the zero level as $t$ moves off $0$, as the 
intersection multiplicity
\beq\label{intmult}
\left(V(J^\text{rel}_G:G^\infty)\cdot (\{0\}\times\CC^3)\right)_{(0,0)}.
\eeq
In fact calculation shows that in this case $(J^\text{rel}_G:G^\infty)$ is equal to the transporter $(J^\text{rel}_G:G)$. However, unlike the situation discussed in Subsection \ref{mui}, $V(J^\text{rel}_G:G^\infty)$ is not Cohen-Macaulay; it has projective dimension 5 as $\OO_{\CC^6,0}$-module, while its codimension is 3. To
 compute the intersection multiplicity, we have to use Serre's {\it formule clef}, from \cite{serre}.
Denote $(J^\text{rel}_G:G^\infty)$ by $Q$; then
$$(V(Q),\{0\}\times\CC^3)_0=\sum_j(-1)^j\dim_\CC\text{Tor}^{\OO}_j\left(\frac{\OO}{Q},\frac{\OO}{(t_1,t_2,t_3)}\right)$$
where $\OO=\OO_{\CC^6,0}$. Since $t_1,t_2,t_3$ is a regular sequence there are at most three non-vanishing $\text{Tor}$ modules, for $j=0,1,2$. A {\it Macaulay} calculation shows that they have dimension $29, 3, 1$ respectively, so that 
 \beq\label{sier}\dim_\CC H_2(D^2_1(f_t))=(V(Q),\{0\}\times\CC^3)_{(0,0)}=29-3+1=27.\eeq
\end{proof}
It is striking that in this case $V(Q)$ is not Cohen-Macaulay. In all of the examples I know, where one uses the
procedure of Subsection \ref{mui} to calculate $\mu_I$, and $G$ is the defining equation of the image of the stable unfolding $F$, the corresponding space $V(J^{\text{rel}}_G:G^\infty)$ is Cohen Macaulay. 
 
To relate the homology of $D^2_1(f_t)$ to the homology of $D^2(f_t)$, we use the image computing spectral sequence: $D^2_1(f_t)$ is the image of the projection $\pi^2:D^2(f_t)\to U_t$. 
Taking account of the facts that $f_t$ has no quadruple points, so that $\pi^2$ has no triple points,  and that $H_1(D^2(f_t))=0$, the $E^1$ term is reduced to 
\beq\label{icss}\xymatrix{0=H_0^\alt(D^2(\pi^2))\ar[d]&H_1^\alt(D^2(\pi^2))\ar[d]_{d_1}&0\\
H_0(D^2(f_t))&0&H_2(D^2(f_t))}
\eeq
and the spectral sequence collapses here. So 
\beq\label{2eq}27=\dim_\CC H_2(D^2_1(f_t))=\dim_\CC H_2(D^2(f_t))+\dim_\CC H_1^\alt(D^2(\pi^2)).\eeq
Recall from Remark \ref{inter}  the isomorphism $i:D^3(f_t)\to D^2(\pi^2:D^2(f_t)\to U_t)$, given by
$(a,b,c)\mapsto ((a,b),(a,c))$. The involution on $D^2(\pi^2)$ lifts to the transposition $(2,3)$ on $D^3(f_t)$. Thus
under the induced isomorphism of first homology, $H_1^\alt(D^2(\pi^2))$ corresponds to the $-1$ eigenspace of $(2,3)_*$ on
$H_1(D^3(f_t))$.
On each copy of the 2-dimensional irreducible representation $\rho$, and on each copy of the sign representation, $(2,3)$ has 1-dimensional $-1$ eigenspace. Thus, using \ref{dimde} for the second equality,
\beq\label{it} \dim_{\CC}H_1^\alt(D^2(\pi^2))=\dim_{\CC}H_1^\alt(D^3(f_t))+\frac{1}{2}\dim_{\CC}H_1^\rho(D^3(f_t))=17,\eeq
and, by \eqref{2eq}, 
\beq\label{3eq}\dim_{\CC}H_2(D^2(f_t))=10.\eeq

\section{Calculations for the germ $(\CC^5,0)\to (\CC^6,0)$.}\label{5to6}

The germ 
$$f_0(x,y,a,b,c)=(x^2+ax+by,xy,y^2+cx+ay,a,b,c)$$
has $\mu_I=\mathcal{A}_e$-codimension $= 1$,  and versal unfolding
$$F(x,y,a,b,c,t)=(x^2+ax+by,xy,y^2+cx+(a+t)y,a,b,c,t).$$
Let $\xymatrix{U_t\ar@{->>}[r]^{f_t}&X_t}$ be a stable perturbation of $f_0$, with contractible domain $U_t\subset\CC^5$.
By \eqref{hgm},
$$1=\text{rank}\,H_5(X_t)=\text{rank}\,H_4^\alt(D^2(f_t))+\text{rank}\, H_3^\alt(D^3(f_t)).$$
As in the previous section, we approach $D^3(f_t)$ via its projection to $U_t$, $D^3_1(f_t)$. As before, $D^3_1(f_t)$ is defined by the pull-back of the second Fitting ideal of $\OO_{\CC^5,0}$ considered as $\OO_{\CC^6,0}$-module.
The $\OO_{\CC^6\times\CC,(0,0)}$-module $F_*\left(\OO_{\CC^5\times\CC, (0,0)}\right)$ has presentation 
\beq\label{pres6}\begin{pmatrix} Y^2-XZ-abZ-bcY+atY&aY+cX+tY&aY+bZ\\
 aY+cX+tY&-Z-ac&Y-bc\\
 aY+bZ&Y-bc&-X-ab-bt
 \end{pmatrix}.
       \eeq
 so
 $$\fitt_2=(Z+ac,Y-bc, X+(a+t)b)$$
 and
 $$F^*\fitt_2=(y^2+y(a+t)+xc+ac, xy-bc, x^2+xa+yb+ab+bt)$$
 $$=\text{min}_2\begin{pmatrix} -y&  -c   \\
      x+a &-y-a-t \\
       b &  x 
       \end{pmatrix}
       $$  
The corresponding  ideal for $t=0$ defines the 3-fold singularity $D^3_1(f_0)$. A {\it Macaulay2} calculation shows that the $T^1$ of $D^3_1(f_0))$ has dimension $1$. 
Therefore $D^3_1(f_0)$ is isomorphic to the unique non-ICIS codimension 2 Cohen Macaulay 3-fold singularity with $\tau=1$, which one can find in the table on page  22 of \cite{FruhbisZach}. This table also lists the Betti numbers of 
a smoothing, from which we obtain 
\beq\label{n-1}h_0(D^3_1(f_t))=1,\quad h_1(D^3_1(f_t))=0,\quad h_2(D^3_1(f_t))=1,\quad h_3(D^3_1(f_t))=0.\eeq 
In particular, $\chi(D^3_1(f_t))=2.$

Now
$D^3(f_t)$ and $D^3_1(f_t)$ are smoothings of $D^3(f_0)$  and $D^3_1(f_0)$.  Let $\pi:D^3(f_t)\to D^3_1(f_t)$ be the projection $\pi(P,Q,R)=P$.  Then $D^3(f_t)$ is a branched double cover of $D^3_1(f_t)$: for a generic point $P\in D^3_1(f_t)$, which shares its $f_t$-image with $Q$ and $R$, $\pi^{-1}(P)=\{(P,Q,R),(P,R,Q)\}$. Because there are no quadruple points, the branching is of two types:
\begin{itemize}
\y
over a point $P$ where $f_t$ has a stable singularity  of type $\Sigma^{1,1,0}$,  $\pi^{-1}(P)=\{(P,P,P)\}$.  The set of all such points $P$ is denoted
$\Sigma^{1,1}f_t$
\y
if $f_t(P)=f_t(Q)$ with $f_t$ an immersion at $P$, and of type $\Sigma^{1,0}$ at $Q$, then 
$\pi^{-1}(P)=\{(P,Q,Q)\}$, so $(P,Q,Q)$ is a branch point.  We denote the set of all such points $P$ by $D^{3}_{1,0}(f_t)$. Note that because  $\quad\pi^{-1}(Q)=\{(Q,P,Q),(Q,Q,P)\},$
$(Q,P,Q)$ and $(Q,Q,P)$ are not branch points.

\end{itemize}
Thus 
\beq\label{n0}\chi(D^3(f_t))=2\chi(D^3_1(f_t))-\chi(D^{3}_{1,0}(f_t))=4-\chi(D^{3}_{1,0}(f_t)).\eeq
\subsection{Equations for $\Sigma^{1,1}f$}
The ramification ideal $R_f$, generated by the $5\times 5$ minors of the jacobian matrix $J$ of $f_0$ defines the non-immersive locus $\Sigma f$ of $f_0$. Then
$\Sigma^{1,1}(f)$ is defined by the ideal of maximal minors of the matrix obtained by concatenating $J$ with the jacobian matrix of a set of generators of $R_f$ (see e.g.\cite{matherTB}). By removing from this ideal an $\mathfrak{m}$-primary component we obtain the ideal  
$$S:=(3y + a, 3x + a, ac - 3bc, ab - 3bc, a ^2- 9bc),$$
easily recognised as defining a curve isomorphic to the germ of the union of the three coordinate axes in $(\CC^3,0)$. This has 
$\delta=2$ and therefore $\mu=2\delta-r+1=2$. It is not quite evident that this is deformed flat in a deformation of 
$f_0$, but nevertheless this is the case. The corresponding locus for the 1-parameter versal deformation $F$ of $f_0$ has an
$\mathfrak{m}$-primary component, whose removal leaves a 2-dimensional Cohen-Macaulay component which 
restricts to $\Sigma^{1,1}f_0$.
\vs
\subsection{Equations for $D^{3}_{1,0}(f_0)$}\label{shadow}
By the description above, $D^{3}_{1,0}(f_0)$ is the ``shadow component'' of $f_0^{-1}(f(\Sigma f_0))$, that is, the closure of
$f_0^{-1}(f_0(\Sigma f_0))\ssm \Sigma f_0$. To find equations for it, we first look for equations for the support of $f_{0*}(\OO_S/{\mathcal R}_{f_0})$. Let $I_0$ be the radical of the zero'th Fitting ideal of $f_{0*}(\OO_S/{\mathcal R}_{f_0})$, let $I_1=f_0^*(I_0)$, and let $I_2$ be the saturation $I_1:{\mathcal R}_{f_0}^\infty$, in this case equal to  $I_1:{\mathcal R}_{f_0}^2$. After some effort one finds that 
$I_2$ is the ideal of maximal minors of the $2\times 4$ matrix  
$$\pmat a  &     b  & x   &   y  \\ 
       -3y+a& x+a &-y-a& 3y-a+4c\emat.
       $$ 
This is isomorphic to the cone over the rational normal curve of degree 4 (Pinkham's example). In the versal deformation $F$, the same construction leads to the ideal of maximal minors of the $2\times 4$ matrix 
$$
\pmat a  &     b  & x   &   y+t   \\ 
       -3y+a+t& x+a &-y-a-t& 3y-a+4c-t\emat
$$ 
One checks that this defines a smoothing of $D^3_{1,0}(f_0)$, over the Artin component of the base space (since it is given by the minors of a $2\times 4$ matrix). So the only non-zero reduced Betti number is $\beta_2=1$ ((see e.g.
\cite{nemethi04} page 173). In particular 
\beq\label{n1}\chi(D^3_{1,0}(f_t))=2.\eeq

              \subsection{Homology of $D^3(f_t)$}\label{D35to6}
By \eqref{hgm} and Lemma \ref{conl}, 
\beq\label{rest}H_i(D^3(f_t))=H^\alt_i\oplus H^\rho_i.\eeq
Denote by $h_i^\alt$ and $h_i^\rho$ the ranks of these summands. 

Because there are no quadruple points, $D^3_1(f_t)$ is the quotient of $D^3(f_t)$ by the $\ZZ_2$-action generated by the transposition $(2,3)(P,Q,R)=(P,R,Q)$.  So 
$H_i(D^3_1(f_t))$ is the part of $H_i(D^3(f_t))$ invariant under $(2,3)_*$. Since $H_i^T(D^3(f_t))=0$ for $i>0$, 
the $(2,3)_*$ -invariant part of $H_i(D^3(f_t))$ is $(2,3)_*$-invariant part of $H^\rho_i(D^3(f_t))$, and thus isomorphic to the sum of copies of the subspace of $\rho$ invariant under
$(2,3)$. The $(2,3)$-invariant subspace of $\rho$ is 1-dimensional. Thus, 
\beq\label{n1} h_i(D^3_1(f_t))=\frac{1}{2}h_i^\rho(D^3(f_t))\eeq
for $i>1.$
Hence, by \eqref{n-1}, 
\beq\label{n2}h_1^\rho(D^3(f_t))=0,\  \ h_2^\rho(D^3(f_t))=2,\ \  h_3^\rho(D^3(f_t))=0.\eeq
On the other hand, as $D^3(f_t)$ is a branched cover of degree 2 of $D^3_1(f_t)$, branched along $D^3_{1,0}(f_t)$, it follows that 
$$\chi(D^3(f_t))=2\chi(D^3_1(f_t))-\chi(D^{3}_{1,0}(f_t))=2.$$
Putting this together with \eqref{n1}, we have
$$2=\chi(D^3(f_t))=1-\left(h_1^\rho+h_1^\alt\right)+\left(h_2^\rho+h_2^\alt\right)-\left(h_3^\rho+h_3^\alt\right)=1-h^\alt_1+h_2^\alt+2-h^\alt_3.$$
so
\beq\label{n3}-1=-h_1^\alt+h_2^\alt-h_3^\alt.\eeq
By \cite[Theorem 4.6]{Hou97}, the alternating homology of the multiple point spaces of a stable perturbation of a finitely determined map-germ is concentrated in middle dimension.  As $D^3(f_t)$ is a 3-fold, this means
$h_i^\alt(D^3(f_t))=0$  for $i\neq 3$ and so from \eqref{n3}, $h_3^\alt(D^3_1(f_t))=1$. Since here $\mu_I=1$, we conclude from \eqref{hgm} that $h_4^\alt(D^2(f_t))=0$. Also, from \eqref{n2} and \eqref{rest}, we conclude that $H_1(D^3(f_t))=0$,
$\dim_{\QQ}H_2(D^3(f_t))=2$ and $\dim H_3(D^3(f_t))=1$.

\begin{remark}{\em An application of the extended version of the Lefschetz Fixed Point Theorem gives the same conclusion:  the fixed set of the involution 
$(2,3)$ on $D^3(f_t)$ is homeomorphic to the branch locus $D^3_{1,0}(f_t))$, and so by the extended version of the Lefschetz Fixed Point Theorem, 
$$2=\chi(\text{Fix}(2,3))=\sum_i(-1)^i\left(\text{Tr}(2,3)_*:H_i(D^3(f_t))\to H_i(D^3(f_t))\right)=1+h_1^\alt-h_2^\alt+h_3^\alt$$
(recall that $\chi_\rho(2,3)=0$).
This equation is of course the same as \eqref{n3}.}
\end{remark}
\subsection{Double Points}\label{defdub}
The double locus of $F$ is defined by $F^*(\fitt_2(F_*(\OO_S))$, which is a principle ideal generated by the composite with $F$ of the determinant of the lower right $2\times 2$ submatrix of \eqref{pres6}. We will call this composite $G$. As in Lemma \ref{nocm},  $D^2_1(f_t)$ has the homotopy type of a wedge of $4$-spheres, whose number is the intersection number of $V(J^{\text{rel}}_G:G^\infty)$ with $\CC^5\times \{0\}$.  A {\it Macaulay2} calculation gives
\beq\label{d2}
 (J^{\text{rel}}_G:G^\infty)=(J^{\text{rel}}_G:G)=(3c - t, 3b + t, 2a + t, 6y + t, 6x - t).
 \eeq
 The zero-locus of this ideal is a smooth curve of degree 1 over the $t$-axis, so $D^2_1(f_t)$ is homotopy-equivalent to a single $4$-sphere, by
 Siersma's theorem \cite{siersma}. 
 \def\ftt{f^{(2)}_t}
  
The map of pairs $\xymatrix{(D^2(f_t),D^3_2(f_t))\ar[r]^{\pi^2_1}& (D^2_1(f_t),D^3_1(f_t))}$ induces a morphism between the long exact sequences of reduced homology of the pairs. Because $\pi^2_1:D^2\to D^2_1$ is an isomorphism outside $D^3_2$, the morphisms $H_i(D^2, D^3_2)\simeq H_i(D^2_1,D^3_1)$ are isomorphisms for all $i$. 
From the segment
$$\xymatrix{\cdots \ar[r]&H_2(D^2,D^3_2)\ar@{=}[d]\ar[r]&H_1(D^3_2)\ar[d]\ar[r]&H_1(D^2)\ar[d]\ar[r]&H_1(D^2,D^3_2)\ar@{=}[d]\ar[r]&0\\
\cdots \ar[r]&H_2(D^2_1,D^3_1)\ar[r]&H_1(D^3_1)\ar[r]&H_1(D^2_1)\ar[r]&H_1(D^2_1,D^3_1)\ar[r]&0}
$$
and the fact that $H_1(D^2_1)=0=H_1(D^3_2)$ (the latter equality because $D^3_2\simeq D^3$, as there are no quadruple points) we see that
$H_1(D^2(f_t))=0.$ Because $H_2(D^2_1,D^3_1)$ is sandwiched between $0$'s in the lower sequence,  continuing to the left we have 
$$\xymatrix{&\QQ\ar@{=}[d]&&&\QQ^2\ar@{=}[d]\\
H_4(D^2,D^3_2)\ar@{=}[d]\ar[r]&H_3(D^3_2)\ar[r]\ar[d]&H_3(D^2)\ar[d] \ar[r]&H_3(D^2,D^3_2)\ar@{=}[d]\ar[r]&H_2(D^3_2)\ar[d]\ar[r]&H_2(D^2)\ar[d]\ar[r]&0\\
H_4(D^2_1,D^3_1)\ar[r]\ar@{=}[d]&H_3(D^3_1)\ar@{=}[d]\ar[r]&H_3(D^2_1)\ar@{=}[d]\ar[r]&H_3(D^2_1,D^3_1)\ar[r]\ar@{=}[d]&H_2(D^3_1)\ar[r]\ar@{=}[d]&H_2(D^2_1)\ar[r]\ar@{=}[d]&0\\
\QQ&0&0&\QQ&\QQ&0}
$$
We deduce successively
\begin{itemize}
\y $H_3(D^2,D^3_2)\simeq\QQ$ and $H_3(D^2,D^3_2)\to H_2(D^3_2)$ is injective, and therefore
\y
$H_2(D^2)\simeq\QQ$.
\y
$H_3(D^3_2)\to H_3(D^2)$ is surjective
\end{itemize}
The left end of the upper sequence is thus
$$\xymatrix{0\ar[r]&H_4(D^2)\ar[r]&H_4(D^2,D^3_2)\ar[r]&H_3(D^3_2)\ar[r]&H_3(D^2)\ar[r]&0}$$
with the two inner modules both isomorphic to $\QQ$.
 \subsection{Homology of $M_2(f_t)$}
 By comparing the homology of $D^2(f_t)$ and $M_2(f_t)$ (which we will shortly determine), we might hope to gain some information about the homology of $D^2(f_t)$. 
 All of the homology groups of $M_2(f_t)$ vanish. This can be seen with the help of the morphism $f_{t*}$ from the long exact sequence of the pair $(U_t,D^2_1(f_t))$ to the long exact sequence of the pair $(X_t, M_2(f_t))$. Because $f_t$ is an isomorphism outside $D^2_1(f_t)$, the morphisms of relative homology groups
 $$f_{t*}:H_i(U_t,D^2_1(f_t))\to H_i(X_t,M_2(f_t))$$ are all isomorphisms. From the top row of the diagram
 $$\xymatrix{0\ar[r]&0=H_5(U_t)\ar[r]\ar[d]&H_5(U_t,D^2_1)\ar[r]\ar[d]&H_4(D^2_1)\ar[r]\ar[d]&H_4(U_t)=0\ar[d]\\
 0\ar[r]&H_5(X_t)\ar[r]&H_5(X_t,M_2)\ar[r]&H_4(M_2)\ar[r]&H_4(X_t)=0}
 $$
 we see that $H_5(U_t,D^2_1)\simeq\QQ$. Hence $H_5(X_t,M_2)\simeq \QQ$ also, and then from the bottom row it follows that $H_4(M_2)=0.$ A similar argument shows that $H_i(M_2)=0$ for $0<i<4$. 

In fact Houston shows in \cite{Hou97} by a rather more sophisticated argument that for a stable perturbation $f_t$ of an $\mathcal A$-finite germ $f_t$, all of the $M_k(f_t)$ are wedges of spheres in middle dimension.

 \subsection{Relation between $D^2$ and $M_2$}
 There is a surjective map $f_t^{(2)}:D^2(f_t)\to M_2(f_t)$, $f^{(2)}_t(P,Q)=f_t(P)$. The multiple point spaces of $f^{(2)}_t$ are related to those of $f_t$, but are not identical. Consider the following maps:
 $$\alpha:D^2(f)\to D^2(\ftt),\quad (P,Q)\mapsto (\bigl((P,Q),(Q,P)\bigr)$$
 $$\beta:D^2(\ftt)\to D^2(f),\quad \bigl((P,Q),(R,S)\bigr)\mapsto (P,R).$$
 Denote by $(1,2)$ the usual involution on $D^2$. The diagrams 
 $$\xymatrix{D^2(f_t)\ar[d]_\a\ar[r]^{(1,2)}&D^2(f_t)\ar[d]^\a\\
 D^2(\ftt)\ar[r]^{(1,2)}&D^2(\ftt)}\quad\quad \xymatrix{D^2(f)\ar[r]^{(1,2)}&D^2(f)\\
 D^2(\ftt)\ar[r]^{(1,2)}\ar[u]^\beta&D^2(\ftt)\ar[u]^\beta}
 $$
 both commute, and $\beta\circ\alpha$ is the identity on $D^2(f)$. It follows that $\a$ and $\beta$ induce morphisms $$\xymatrix{H_i^\alt(D^2(f_t))\ar@/^/[r]^{\a_*}& H_i^\alt(D^2(\ftt)\ar@/^/[l]^{\beta_*}}$$
 and $\beta_*\circ\alpha_*$ is the identity.
 
 However $\a$ is not surjective and $\beta$ is not injective. Suppose that $(P,Q,R)\in D^3(f_t)$ with $P, Q, R$ pairwise distinct. Then
 $$\bigl((P,Q),(Q,P)\bigr), \bigl((P,Q),(Q,R)\bigr), \bigl((P,R),(Q,P)\bigr), \bigl((P,R),(Q,R)\bigr)$$
 all lie in $D^2(\ftt)$ and all are mapped by $\beta$ to $(P,Q)$. And, of these, only $\bigl((P,Q),(Q,P)\bigr)$ is in the 
 image of $\a$.
 
 We draw no further conclusion from this, but ask whether further consideration of the multiple point spaces
 of the map $f_t^{(2)}$ and indeed of $f^{(k)}_t$ for higher $k$ may provide useful information.
 
 Nevertheless, from the vanishing of $H_1(M_2(f_t))$, and the image-computing spectral sequence, we obtain a second argument that $H_1(D^2(f_t))=0$. 

\def\cprime{$'$} \def\cprime{$'$} \def\cprime{$'$} \def\cprime{$'$}
  \def\cprime{$'$} \def\cprime{$'$}
\providecommand{\bysame}{\leavevmode\hbox to3em{\hrulefill}\thinspace}
\providecommand{\MR}{\relax\ifhmode\unskip\space\fi MR }
\providecommand{\MRhref}[2]{%
  \href{http://www.ams.org/mathscinet-getitem?mr=#1}{#2}
}
\providecommand{\href}[2]{#2}

\end{document}